%% file: main_mf.tex
\documentclass[11pt]{article}
\usepackage[left=1in,top=1in,right=1in,bottom=1in,letterpaper]{geometry}

\usepackage[colorlinks, linkcolor=red,filecolor=blue,citecolor=blue,urlcolor=blue]{hyperref}

\usepackage{mathtools} 

\usepackage{url}
\usepackage{amsthm,amsmath,amssymb, amscd}
\usepackage{mathrsfs}
\usepackage{amsfonts}
\usepackage[utf8]{inputenc}
\usepackage{subfigure}
\usepackage{graphicx}
\usepackage{bbding}
\everymath{\displaystyle}
\usepackage{indentfirst}

\usepackage{enumerate}
\usepackage{bm,bbm}
\usepackage[shortlabels]{enumitem}
\usepackage{algorithm, algorithmic}
\usepackage{dsfont}
\usepackage{booktabs}
\usepackage{xspace}
\usepackage{pifont}
\usepackage{xcolor}
\usepackage{cleveref}
\usepackage{dsfont}
\usepackage{mathpazo}
\PassOptionsToPackage{compress, square,numbers}{natbib}
\usepackage{natbib}
\numberwithin{equation}{section}
\usepackage{apptools}
\AtAppendix{\counterwithin{thm}{section}}

\input{math_commands}

\input{def}

\renewcommand{\P}{\mathbb{P}}
\renewcommand{\eqref}[1]{(\ref{#1})}
\usepackage{colortbl}
\definecolor{Ocean}{RGB}{129,194,234}

\title{Actor-Critic Learning for Extended Mean Field Control with Deterministic Policies
}

\numberwithin{equation}{section}

\theoremstyle{plain}

\newtheorem{assumption}{Assumption}[section]

\theoremstyle{definition}

\theoremstyle{remark}

\numberwithin{table}{section}

\usepackage{bbm}

\def\cF{\mathcal{F}}

\def\cL{\mathcal{L}}

\def\cO{\mathcal{O}}
\def\cP{\mathcal{P}}

\def\cW{\mathcal{W}}

\def\d{{\mathrm{d}}}

\def\sE{{\mathbb{E}}}
\def\sF{{\mathbb{F}}}

\def\sP{\mathbb{P}}

\def\sR{{\mathbb R}}

\DeclareMathOperator*{\tr}{tr}

\DeclareMathOperator*{\id}{\textnormal{id}}

\begin{document}
\author{
Ziheng Cheng\thanks{University of California, Berkeley. Email: \texttt{ziheng\_cheng@berkeley.edu}}
\and Xin Guo\thanks{University of California, Berkeley. Email: \texttt{xinguo@berkeley.edu}}
\and Huy\^en Pham \thanks{Ecole Polytechnique, CMAP, Email: \texttt{huyen.pham@polytechnique.edu} }
\and Yufei Zhang\thanks{Imperial College London. Email: \texttt{yufei.zhang@imperial.ac.uk}}
} 

\date{}

\maketitle 
\begin{abstract}
This paper develops a model-free reinforcement learning framework for continuous-time extended mean field control problems, where both the dynamics and reward may depend on the joint distribution of states and controls.  We adopt deterministic feedback policies, under which the state–action distribution is induced directly as a push-forward of the state law. This avoids optimization over stochastic kernels and bypasses key limitations of existing approaches in extended mean field settings. 

We first establish a model-free sensitivity formula for parameterized McKean–Vlasov dynamics and use it to derive a deterministic policy gradient formula expressed through an advantage-rate function on the Wasserstein space. We then refine this formula by introducing local value and advantage-rate representations that depend on the state, action, and joint state–action distribution, yielding a policy gradient that includes both action derivatives and measure-derivative terms with respect to the control distribution. These characterizations lead to a martingale-based learning principle and motivate a continuous-time deep deterministic policy-gradient algorithm combining particle approximations, measure-dependent neural networks, temporal-difference learning, and exploration in either action or parameter space. 

Numerical experiments on stochastic Cucker–Smale consensus control and optimal liquidation with trade crowding demonstrate the efficiency, stability, and robustness of the proposed method, including  problems with explicit dependence on the control distribution.
 \end{abstract}

%

\section{Introduction}

\paragraph{(Extended) MFC.} 

(Extended) mean field control (MFC) provides a tractable framework for large population stochastic games, by optimizing the collective behavior  of homogeneous and interacting agents through a central planner \cite{acciaio2019extended}. In the mean-field limit, the controlled state of a (representative) agent is governed by a McKean-Vlasov system 
\begin{align}
 \label{eq:intro_MV}
\begin{split}
 \d X^{\alpha}_s =b(s,X^{\alpha}_s,
 {\alpha}_s, \sP_{(X^{\alpha}_s,
 {\alpha}_s)})\d s+\sigma(s,X^{\alpha}_s,
 {\alpha}_s, \sP_{(X^{\alpha}_s,
 {\alpha}_s)})\d W_s,
 \end{split} 
\end{align}
where both the drift and diffusion  coefficients depend on the   distribution of the state  $X^{\alpha}$ and the control ${\alpha}$.  Although this limit removes the explicit dependence on the number of agents, the associated extended MFC is intrinsically an infinite-dimensional control problem: the value function and the optimal feedback generally depend not only on time and the individual state, but also on the state distribution. 
The most fundamental theoretical ingredients in the development of MFC theory are the dynamic programming principle for the lifted value function defined on the space of probability measures, and the invariance property of this value function along the deterministic state flow (see e.g., \cite{pham2018bellman, cosso2023optimal}).

\paragraph{Continuous-time RL with deterministic policy.}

In extended mean field control, both the dynamics and reward may depend on the joint distribution of states and controls.
To address this challenge, we adopt deterministic feedback policies.
A deterministic policy $\varphi(t,x,\mu)$
selects an action directly based on the current time, state and state law,  
so that the state law together with the feedback map completely determines the associated state--action distribution through the push-forward measure  $\Gamma_t^{\varphi}=
\bigl(\operatorname{id},\varphi(t,\cdot,\mu_t)\bigr)_{\#}\mu_t,
$ with $\mu_t$ being the current state distribution. 
For continuous-time reinforcement learning (RL), 
recent analysis using  deterministic   policies  \cite{cheng2025deterministic,guo2026deterministic} have shown superiority  over both the discretization approach and the stochastic policies. Analytically,  the state–action distribution is induced
directly as a push-forward of the state law.
Hence the policy optimization  over controls is reduced to an optimization over a parameterized family of  deterministic policies  $\varphi_{\theta}$ (with respect to parameter $\theta$). 
Numerically, deterministic policies have also been shown to offer improved computational efficiency and greater stability \cite{cheng2025deterministic}.

\paragraph{Learning for discrete-time MFC.}
In the RL  framework of MFCs, the agent does not know the state coefficients $b$ and $\sigma$, nor the reward functions, and instead, learns by interacting with the controlled system. In discrete time, dynamic programming principles have been established for both state- and state-action value functions, leading to value-based algorithms including Q-learning and integrated Q-functions based on local policies  \cite{motte2022mean, carmona2023model, gu2023dynamic}. 
More recently, model-free policy-gradient methods have also been proposed in \cite{meunier2026MFC}, culminating in the MF-REINFORCE algorithm with theoretical guarantees on the bias and variance of the policy-gradient estimator.

\paragraph{Our work for continuous-time extended MFC.}
 This paper develops an RL framework for conti\-nuous-time extended MFC, adopting the deterministic policy approach.
 

We first develop a model-free deterministic policy gradient formula for continuous-time extended MFC    (Theorem \ref{thm:dpg_flow}). This representation expresses the gradient of value function with respect to the policy parameter  as an integral involving the derivative of an advantage-rate function along the flow of state distributions. This advantage-rate function is lifted to a function on the space of probability measures and is characterized jointly with the value function through an invariance principle along the  flow of state distributions.  These results are derived using a model-free sensitivity formula for parametric McKean–Vlasov dynamics (Theorems \ref{thm:sensitivity} and \ref{thm:sensitivity_critic}), which relies on a performance difference lemma 
(Lemma \ref{prop:performance_difference})
and the robustness of the state distribution with respect to model parameters. These results for continuous-time extended MFC are consistent with their discrete-time counterpart  regarding lifted (state–action) value functions and Q-functions \cite{motte2022mean, carmona2023model, gu2023dynamic}.

We then refine the deterministic policy gradient formula by explicitly accounting for its dependence on the joint state–action distribution (Theorem \ref{thm:martingale}). Specifically, we decompose the lifted value function and the lifted advantage-rate function into integrals of local functions of the state, action, and the joint state–action distribution. The resulting policy gradient contains both the usual derivative of the local advantage-rate function with respect to the individual action and an additional measure-derivative term with respect to control distribution. We further establish a martingale characterization to learn  these local representations directly from observed state  and control trajectories.
 Just as the incorporation of local policies facilitates efficient learning in the 
discrete-time MFC \cite{gu2023dynamic}, this local advantage-rate function   allows more informative learning  
 than directly learning 
from the flow of state laws (Section \ref{sec:alg}).
This approach applies to general extended MFC problems, and removes the restrictive assumption in   \cite[Remark 3.6]{pham2025actor} which requires a separable structure on the coefficients and explicit and known dependence on the control variable.

Finally, we propose a continuous-time deep deterministic policy-gradient algorithm for extended MFC. The algorithm combines measure-dependent neural networks, particle approximations of the McKean-Vlasov dynamics, temporal-difference learning based on the aforementioned martingale characterization, and action- or parameter-space exploration. The use of finite-dimensional distribution embeddings is consistent with the neural approximation principles developed in \cite{mekkaoui2026learning, soner2025learning}, while the deterministic actor avoids direct optimization over a space of stochastic kernels. Numerical experiments for consensus control in a stochastic Cucker-Smale system and optimal liquidation with mean-field market impact demonstrate the efficiency and robustness of the proposed method, including in settings where the dynamics depend explicitly on the distribution of the control.

\paragraph{Earlier work for RL in continuous-time-space MFC.}  Many earlier studies on continuous-time MFC reinforcement-learning have been built on stochastic, or exploratory, policies; and focuses on the setting  where the coefficients depend only on the law of the state process. In this exploratory formulation,  the admissible policy class is enlarged to stochastic policies that map the time, state, and state distribution to a probability measure over the action space.
However, RL algorithms under the exploratory control formulation appear too restrictive for
the infinite-dimensional nature of MFC.    For example, in \cite{pham2025actor}, the state coefficients are required to possess a separable structure, and the explicit dependence on the control variable must be known in order to characterize the mean-field $q$-function. In \cite{wei2024unified,wei2025continuous},  algorithms require learning   two $q$-functions: an integrated $q$-function identified via weak martingale conditions by testing against all stochastic policies, and an essential $q$-function used for policy improvement. In \cite{ren2026continuousI}, the exploratory HJB equation contains an additional nonlinear functional of the policy, and the optimal policy is characterized by a two-layer fixed-point problem over the space of stochastic policies.  The corresponding algorithms in  \cite{ren2026continuousII} then face the issue that data generated by the relaxed-control formulation are not directly observable, requiring additional approximation by discretely sampled exploratory actions, as in \cite{szpruch2024optimal,jia2026accuracy}.

 To see analytically the fundamental obstacle  of stochastic-policy methods for learning extended MFC,  consider a stochastic Markov policy
$\pi(\mathrm{d}a | t,x,\mu)$
 and its induced state-action distribution
    $\Gamma_t^{\pi}(\mathrm{d}x,\mathrm{d}a)
    = 
    \bigl(\operatorname{id},
    \pi(\mathrm{d}a | t,\cdot,\mu_t)
\bigr)_{\#}\mu_t$.
In a standard MFC problem whose coefficients depend {\it only on the state distribution}, stochastic policies produce an auxiliary generator by integrating the original state generator over the action space with respect to  the stochastic policy $\pi$, which is linear in the stochastic policy. This linearity is central for stochastic-policy based algorithms  \cite{frikha2025actor, pham2025actor, bayraktar2026policy}.
When applying the  exploratory control formulation   to extended MFC, randomizing actions through relaxed controls necessarily alters the control distribution $\sP_{\alpha_t}$ in \eqref{eq:intro_MV}, so that the action randomization can no longer be separated from the interaction induced by the control distribution. Thus, one cannot derive an auxiliary dynamics whose generator depends linearly on the stochastic policy.  

In fact, stochastic policies face  algorithmic challenges even in the single-agent setting. They require sampling random actions at very high frequency, leading to irregular control trajectories that may be impractical in real-world applications \cite{szpruch2024optimal,jia2026accuracy,ren2026continuousII}. Moreover, the Bellman equation under stochastic policies  involves integration over continuous action spaces. Enforcing this condition requires   Monte Carlo approximation, which is computationally expensive and often leads to unstable and slow convergence.

In contrast, 
the benefit  of deterministic policies for MFC is clear: the state–action distribution is induced directly as a push-forward of the state law. It  avoids optimization over stochastic kernels, helps bypass key limitations of exploratory-policy approaches in extended mean field settings, and facilitates policy updates by differentiating through both the selected actions and the induced state--action distributions. As demonstrated in \cite{cheng2025deterministic} for single-agent RL problems and the numerical experiments in this paper, deterministic policies offer significant computational advantages   in continuous-time-space settings, leading to model-free algorithms with improved stability and faster convergence compared with stochastic-policy methods.

\paragraph{Notation.}

We denote by $x\cdot y$ the scalar product between two vectors $x\in \sR^m$ and $y\in \sR^m$,
and by $M\colon N=\tr(M^\top N)$ the inner product between 
two matrices $M\in \sR^{m\times n}$ and $ N\in  \sR^{m\times n}$.

Given $T>0$, a filtered probability space $(\Omega, \mathcal{F}, \mathbb{F}=(\mathcal F_t)_{t\in [0,T]},  \mathbb{P})$, 
 a normed space $(E, |\cdot|)$, 
and an $E$-valued random variable $X$ on $(\Omega, \mathcal{F},    \mathbb{P})$, we denote by $\mathbb{P}_X$ its probability law under $\mathbb{P}$,
by $X\sim \mu$ the fact that  $X $ follows the distribution $\mu$,  by 
$\sE_{\xi\sim \mu}[f(\xi)]=\int_E f(x)\mu (\d x)$
for a measurable  function $f:E\to \sR$,
and 
by $L^2(\mathcal{F}_t; E)$   the set of square integrable   $E$-valued random variables on $(\Omega, \cF_t,\sP)$ for all $t\in [0,T]$.  
We denote by $\mathcal{P}_2(E)$ the set of probability measures $\mu$ on $E$ that are square integrable, i.e.,
$M_2(\mu) \coloneqq \left(\int_E |x|^2 \mu(dx)\right)^{1/2} < \infty.
$
We shall assume without loss of generality   that $\mathcal{F}_0$ is rich enough to carry $E$-valued random variables with any   square integrable distribution, i.e.,
$\mathcal{P}_2(E) = \{ \mathbb{P}_\xi \mid \xi \in L^2(\mathcal{F}_0; E) \}.
$
 We   equip $ \mathcal{P}_2(E) $ with  the 2-Wasserstein distance $ \cW_2 $  defined    by
\begin{align*}
\cW_2(\mu, \mu') &= \inf \left\{ \left( \int_{E \times E} |x - y|^2 \, \pi(dx, dy) \right)^{1/2} \,\bigg\vert\, \pi \in \mathcal{P}_2(E \times E) \text{ with marginals } \mu \text{ and } \mu' \right\}
\\
&= \inf \left\{ \left( \mathbb{E} \left[ |\xi - \xi'|^2 \right] \right)^{1/2} \,\Big\vert\, \xi, \xi' \in L^2(\mathcal{F}_0; E), \, \mathbb{P}_\xi = \mu, \, \mathbb{P}_{\xi'} = \mu' \right\}.
\end{align*}

We  denote by  
$ C^{1,2}([0, T]\times  \mathcal{P}_2(\mathbb{R}^n)) $   
the space of continuous functions  
$ w : [0, T] \times  \mathcal{P}_2(\mathbb{R}^n) \to \mathbb{R} $ such that 
for all $\mu\in  \cP_2(\sR^n)$,
 $  w(\cdot, \mu)$ is $C^{1}([0,T])$, with   $\partial_t w$  being  continuous in all variables; 
for all $t\in [0,T]$,     $w(t,\cdot)$ is  L-differentiable, 
and $(t,\mu,v)\mapsto \partial_\mu w(t,\mu)(v)$ is continuous at all $(t,\mu,v)$ with   $v\in \operatorname{supp}(\mu)$;  
for all $(t,\mu)\in [0,T]\times \cP_2(\sR^n) $,  $  \partial_\mu w(t,\mu)(\cdot)$ is $C^1$, with 
$(t,\mu, v) \mapsto \partial_v \partial_\mu w(t,\mu)(v)$ being continuous at all $(t,\mu,v)$ with   $v\in \operatorname{supp}(\mu)$; and  for all  compact sets $\mathcal K\subset   \cP_2(\sR^n)$,
$$
\sup_{(t,\mu) \in [0,T]\times \mathcal K}\left[\int_{\sR^n} |\partial_\mu w(t, \mu) (\xi)|^2\mu (\d \xi)
+\|\partial_v \partial_\mu w(t,\mu)\|_{L^\infty_\mu} \right]<\infty. 
$$

\section{Model-free sensitivity formula of McKean-Vlasov dynamics} 
\label{sec:sensitivity}

This   section considers a multidimensional McKean--Vlasov dynamics parameterized by $\theta$ and 
derives  model-free representations  of the gradient of the value functional with respect to $\theta$. 
{
Consistent with the existing literature on MFCs, we consider the lifted value function defined on the Wasserstein space of probability measures. The derivation of these representations exploits both the invariance property of the value function and the robustness of the state distribution with respect to the model parameters. 
These representations will be useful for developing   learning algorithms for the controlled McKean-Vlasov dynamics in Section~\ref{sec:DPG_MFC}.}

Let $T>0$   be a given terminal time, and $(\Omega, \mathcal{F}, \mathbb{P})$ be  a  probability space on which an $m$-dimensional Brownian motion $W = (W_t)_{0 \leq t \leq T}$ is defined. 
We   denote by $\mathbb{F} = (\mathcal{F}_t)_{0 \leq t \leq T}$ the filtration generated by $W$, augmented with $\sP$-null sets, and assume that  the initial  $\sigma$-algebra $\mathcal{F}_0 $ is      rich  enough.
Let 
$\beta\ge 0$
be a discount factor, and consider continuous functions 
$b:[0,T]\times \sR^n\times \cP_2(
\sR^n)\times \sR^k\to \sR^n$, 
$\sigma:[0,T]\times \sR^n\times \cP_2(
\sR^n)\times \sR^k\to \sR^{n\times m}$,
$r:[0,T]\times \sR^n\times \cP_2(
\sR^n)\times \sR^k\to \sR$
and 
$g:\sR^n\times \cP_2(\sR^n)\to \sR$ 
satisfying the following regularity   and growth conditions:
\begin{assumption}
\label{assum:continuity}
There exists a locally bounded function $\omega:[0,\infty)\to [0,\infty) $  such that for all $(t,\theta)\in [0,T]\times \sR^k$, 
$(x,x')\in \sR^n$ and $\mu,\mu'\in \cP_2(\sR^n)$, 
\begin{align*}
|b(t,x,\mu,\theta)-b(t,x',\mu',\theta)|+|\sigma(t,x,\mu,\theta)-\sigma(t,x',\mu',\theta)|
&\le \omega(|\theta|)(|x-x'|+\cW_2(\mu, \mu')),
\\
|b(t,0,\delta_0,\theta) |+|\sigma(t,0,\delta_0,\theta)|
+\frac{|r(t,x,\mu,\theta) |+|g(x,\mu)|}
{1+|x|^2+M_2( \mu)^2}
&\le \omega(|\theta|).
\end{align*}

 \end{assumption}

For each
$t\in [0,T]$,
 $\theta\in \sR^k$, 
 and  $ \xi\in  L^2(\mathcal{F}_t; \sR^n)$, 
  consider 
  the following state dynamics:
 \begin{align}
\d X^{t, \xi,\theta}_s &=b(s,X^{t, \xi, \theta}_s, \sP_{X^{t, \xi,\theta}_s}, \theta)\d s+\sigma(s,X^{t, \xi,\theta}_s, \sP_{X^{t, \xi, \theta}_s}, \theta)\d W_s,
\quad s\in [t,T];
  \quad X^{t, \xi,\theta}_t=\xi.
\label{eq:sde_mv_population}
\end{align}
Under Assumption \ref{assum:continuity},  
\eqref{eq:sde_mv_population}
has a  
unique square-integrable strong solution  $(X^{t, \xi,\theta}_s)_{s\in [t,T]}$ (see e.g., \cite{pham2018bellman} and the references therein).
%
 Moreover, 
due to the  weak uniqueness of \eqref{eq:sde_mv_population},  the law of    $(X^{t, \xi,\theta}_s)_{s\in [t,T]}$  
 depends on  $\xi$ only through its law $\mu=\sP_{\xi}$,  
 and hence 
 we can define
 $$
 \sP^{t,\mu,\theta}_s\coloneqq \sP_{X^{t,\xi, \theta}_s},
 \quad 
s\in [t,T].
 $$
Define  the following value function  
for a given    $\beta\ge 0$:
\begin{align}
\label{eq:cost_theta}
V(t,\mu,\theta) &\coloneqq \sE \left[
\int_t^T 
e^{-\beta (s-t)}
r\left(s, X^{t,\xi,  \theta}_s, \sP_{{X^{t,\xi, \theta}_s}}, \theta\right) \d s +
e^{-\beta (T-t)}
g\left(X^{t,\xi, \theta}_T, \sP_{X^{t,\xi, \theta}_T}\right)
\right]
\\
&=
\int_t^T 
e^{-\beta (s-t)}
\bar r \left(s,  \sP^{t,\mu,\theta}_s, \theta\right) 
\d s +
e^{-\beta (T-t)}
 \bar g ( \sP^{t,\mu,\theta}_T ),
\end{align}
where the function
$\bar r: [0,T]\times \cP_2(\sR^n)\times \sR^k\to \sR$
 and 
 $\bar g:\cP_2(\sR^n)\to \sR$ are defined by  
\begin{equation}
  \bar r (t,\mu,\theta)\coloneqq 
  \int_{\sR^n}
  r(t,x,\mu,\theta) \mu(\d x), 
\quad 
\bar g(\mu ) \coloneqq  \int_{\sR^n}  g(x,\mu)\mu(\d x).
\end{equation}

To  characterize the gradient of $V$ in $\theta$,
for all  
   $w\in C^{1,2}([0,T]\times \cP_2(\sR^n))$,
define the function  
$A[w]:[0,T]\times \cP_2(\sR^n)\times \sR^k \to \sR$
such that for all $(t,\mu,\theta)\in [0,T]\times\cP_2(\sR^n)\times \sR^k$,
   \begin{equation}
   \label{eq:advantage_rate}
     A[w](t,\mu,\theta)\coloneqq  \mathcal L^{  \theta}[w] 
    (t, \mu )  +\bar r(t,\mu,\theta), 
  \end{equation}
  where 
  $ \cL^\theta$  is the generator of \eqref{eq:sde_mv_population} given by
  \begin{align}
\label{eq:generator}
\begin{split}
     \cL^\theta[ w](t, \mu)
     &\coloneqq 
     \partial_t w(t, \mu) -\beta  w(t, \mu)  
        \\
    &\quad+ \sE_{\xi\sim\mu} 
    \left[ b(t,\xi,\mu,\theta)\cdot \partial_\mu w(t, \mu) (
    \xi) 
    +\frac{1}{2}  (\sigma\sigma^\top) (t,\xi,\mu,\theta)\colon     
     \partial_v \partial_\mu w(t, \mu)(\xi) \right].
     \end{split}
\end{align}
We impose the following  regularity conditions  on the value function   $V^\theta$ and the function $A[w]$.

\begin{assumption}
\label{assum:smoothness}
  
\begin{enumerate}[(1)]
\item 
\label{item:value_smooth}
For all $\theta\in \sR^k$,
the   function    $
V(\cdot,\cdot,\theta)  $  
defined by \eqref{eq:cost_theta} 
is in  $C^{1,2}([0, T]    \times \mathcal{P}_2(\mathbb{R}^n))$.

\item 
\label{item:differentable_theta}

For all  
   $w\in C^{1,2}([0,T]\times \cP_2(\sR^n))$,
the function $A[w]$ defined by \eqref{eq:advantage_rate} is    differentiable with respect to $\theta$, 
   and the derivative  
   $\partial_\theta  A[w] : [0,T]\times \cP_2(\sR^n)\times \sR^k\to \sR^k $ is continuous. 
\end{enumerate}

\end{assumption}

Under Assumptions   \ref{assum:continuity}  and 
 \ref{assum:smoothness}, 
 the following theorem shows that the gradient $\partial_\theta V$
 can be expressed as the integral of  $\partial_\theta A$  in \eqref{eq:advantage_rate} along the flow of state laws.

\begin{thm}
\label{thm:sensitivity}
 Suppose Assumptions   \ref{assum:continuity}  and 
 \ref{assum:smoothness}
 hold.  
 For all $(t,  \mu)\in [0,T] \times \cP_2(\sR^n)$
 and $ \theta\in \sR^k$,
 \begin{align}
 \begin{split}
& \partial_\theta V(t,  \mu, \theta )
  =  
    \int_t^T
   e^{-\beta(s-t)} 
   \partial_\theta 
   A [V(\cdot,\cdot,\theta)] 
    (s, \sP^{t,\mu,\theta}_s ,\theta) \, 
   \d s. 
 \end{split}
 \end{align}
   
\end{thm}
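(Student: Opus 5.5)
The plan is to prove a performance difference lemma on the Wasserstein space and then differentiate it in the perturbation parameter. Fix $(t,\mu)$ and $\theta$, and for $\theta'\in\sR^k$ set $w\coloneqq V(\cdot,\cdot,\theta)\in C^{1,2}([0,T]\times\cP_2(\sR^n))$ by Assumption \ref{assum:smoothness}\eqref{item:value_smooth}.

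\textbf{Step 1: invariance along the own flow.} By the flow property $\sP^{s,\sP^{t,\mu,\theta}_s,\theta}_u=\sP^{t,\mu,\theta}_u$, the function $s\mapsto e^{-\beta(s-t)}w(s,\sP^{t,\mu,\theta}_s)+\int_t^s e^{-\beta(u-t)}\bar r(u,\sP^{t,\mu,\theta}_u,\theta)\,\d u$ is constant. Itô's formula along a flow of measures (the chain rule for $C^{1,2}$ functions, valid under Assumption \ref{assum:continuity}) gives the same derivative as $e^{-\beta(s-t)}A[w](s,\sP^{t,\mu,\theta}_s,\theta)$. Combining the two, and using continuity in $s$, we get $A[w](s,\nu,\theta)=0$ for every $\nu$ of the form $\sP^{t,\mu,\theta}_s$. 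Applying this with starting point $(s,\nu)$ arbitrary and letting the increment shrink gives $A[w](s,\nu,\theta)=0$ for all $(s,\nu)$.

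\textbf{Step 2: performance difference.} Apply the same chain rule to $w$ along the perturbed flow $\sP^{t,\mu,\theta'}_s$, whose generator is $\cL^{\theta'}$. Combined with $w(T,\cdot)=\bar g$ and the definition of $V(t,\mu,\theta')$, this yields
\[
V(t,\mu,\theta')-V(t,\mu,\theta)=\int_t^T e^{-\beta(s-t)}A[w](s,\sP^{t,\mu,\theta'}_s,\theta')\,\d s .
\]
The integrability conditions in the definition of $C^{1,2}$, together with the uniform-in-time second moment bounds of the McKean--Vlasov flow on bounded $\theta'$-sets, justify this step. The growth constraint on $\partial_\mu w$ is only over compacts, but the flow $\{\sP^{t,\mu,\theta'}_s\}$ is $\cW_2$-compact for $\theta'$ in a bounded set, since it is continuous in $(s,\theta')$.

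\textbf{Step 3: differentiation.} Take $\theta'=\theta+h e_i$ and subtract $A[w](s,\sP^{t,\mu,\theta'}_s,\theta)=0$, which holds by Step 1. By the mean value theorem, the integrand becomes $h\,\partial_{\theta_i}A[w](s,\sP^{t,\mu,\theta'}_s,\tilde\theta)$ for some $\tilde\theta$ between $\theta$ and $\theta'$. Dividing by $h$ and letting $h\to0$ gives the formula, provided two facts hold:
\begin{enumerate}[(i)]
\item $\sP^{t,\mu,\theta'}_s\to\sP^{t,\mu,\theta}_s$ in $\cW_2$ uniformly in $s$;
\item the integrands are uniformly bounded, so that dominated convergence applies.
\end{enumerate}

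\textbf{Main obstacle: stability of the flow in $\theta$.} Assumption \ref{assum:continuity} gives Lipschitz continuity in $(x,\mu)$ but no continuity in $\theta$. I would first try to obtain (i) by a synchronous coupling and a Grönwall estimate, using continuity of $b,\sigma$ in $\theta$ and the local boundedness of $\omega$ on bounded $\theta$-sets. If a modulus of continuity in $\theta$ is not available uniformly, I would fall back on a tightness and uniqueness argument: any $\cW_2$-limit of $\sP^{t,\mu,\theta'}_\cdot$ solves the McKean--Vlasov equation with parameter $\theta$, and uniqueness identifies the limit. Given (i), the set $\{(s,\sP^{t,\mu,\theta'}_s,\tilde\theta)\}$ is compact, so continuity of $\partial_\theta A[w]$ from Assumption \ref{assum:smoothness}\eqref{item:differentable_theta} yields both boundedness and pointwise convergence, which gives (ii).
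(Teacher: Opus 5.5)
Your proposal is correct and follows the paper's route. You combine a performance-difference identity, obtained from It\^o's formula along the perturbed flow $\sP^{t,\mu,\theta'}$ and the PDE $A[V(\cdot,\cdot,\theta)](\cdot,\cdot,\theta)=0$ (the paper's Proposition~\ref{prop:performance_difference} and Lemma~\ref{lemma:pde}), with a mean-value expansion in $\theta$ and dominated convergence based on $\cW_2$-continuity of the flow in $\theta$, exactly as the paper does. The differences are minor: you derive the PDE from the flow property instead of quoting it, and you prove flow stability by synchronous coupling and Gr\"onwall instead of citing \cite{ma2025continuous} plus a H\"older-in-time bound (Lemma~\ref{lemma:flow_continuity}). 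Your first attempt already works, since $\sE\int_t^T|b(u,X^{t,\xi,\theta}_u,\sP^{t,\mu,\theta}_u,\theta')-b(u,X^{t,\xi,\theta}_u,\sP^{t,\mu,\theta}_u,\theta)|^2\,\d u\to 0$ by the standing continuity of $b,\sigma$ in $\theta$ and dominated convergence, so the tightness fallback is unnecessary.
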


 Theorem \ref{thm:sensitivity}
expresses  the gradient $\partial_\theta V$ directly   in terms of   $\partial_\theta 
   A [V] $, without reference to the model coefficients.
The proof leverages a precise  characterization of the difference between the value functions     corresponding to two different parameters  (Proposition \ref{prop:performance_difference}).
Such a result extends the performance-difference lemmas in \cite[Lemma 6.1]{cheng2025deterministic} and \cite[Lemma 3.2]{sethi2025entropy} from classical control problems to the more general setting of McKean--Vlasov dynamics.
 Compared with existing approaches in \cite{jia2022policy_grad,frikha2025actor}, our method avoids differentiating the PDE for the value function  $V(\cdot,\cdot,\theta)$ with respect to $\theta$,  
 and therefore requires weaker regularity assumptions on the coefficients. In particular, we do not require differentiability in $\theta$ of the first- and second-order time and spatial derivatives of
   $V$, which is imposed in  \cite[Appendix A2]{frikha2025actor}.

{To characterize the advantage rate function, we first show that the value function $V$ and the advantage rate function $A[V]$ must satisfy a Feynman-Kac formula and  an invariance property along the deterministic state flow.

 }

\begin{thm}
\label{thm:sensitivity_critic_ncessary}
 Suppose Assumptions   \ref{assum:continuity}  and 
 \ref{assum:smoothness}
 hold. 
Let  $\theta\in \sR^k$,
 $ V^\theta\coloneqq V(\cdot, \cdot,\theta) \in C^{1,2}([0,T]\times \cP_2(\sR^n)) $ be 
  defined by \eqref{eq:cost_theta}, and   $ q^\theta =  A [V^\theta]\in C ([0,T]\times \cP_2(\sR^n)\times \sR^k)$ be defined  by \eqref{eq:advantage_rate}. 
For all 
 $(t,\mu)\in [0,T]\times \cP_2(\sR^n)$, 
\begin{equation}
      \label{eq:bellman_param_necessary}  {V}^{\theta}(T,\mu)=\bar g(\mu),\quad  {q}^\theta(t,\mu,\theta)=0,
    \end{equation}
and for all $\theta'\in \sR^k $ and $s\in [t,T]$,
    \begin{equation}
   \label{eq:constant_flow_necessary}
        e^{-\beta s }{V}^{\theta}(s,\P^{t,\mu,\theta'}_s)
        - e^{-\beta t }  V^{\theta}(t,\mu)
        +\int_t^s e^{-\beta u}[\bar r (u,\sP^{t,\mu, \theta'}_{u},\theta')-{q}^\theta(u,\sP^{t,\mu, \theta'}_{u},\theta')]\,\d u = 0,
    \end{equation}
where   
$\sP^{t,\mu, \theta'}_{r}=\sP_{X^{t,\xi,\theta'}_r}$,
and 
$X^{t,\xi,\theta'}$ satisfies 
for all $s\in [t,T]$,
  \begin{align*}
\d X^{t, \xi,\theta'}_s &=b(s,X^{t, \xi, \theta'}_s, \sP_{X^{t, \xi,\theta'}_s}, \theta')\d s+\sigma(s,X^{t, \xi,\theta'}_s, \sP_{X^{t, \xi, \theta'}_s}, \theta')\d W_s,
  \quad X^{t, \xi,\theta'}_t=\xi,
\end{align*}
with some $\xi\in L^2(\cF_t;\sR^n)$ 
having the law 
  $ \mu$.

\end{thm}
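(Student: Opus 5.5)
The plan has two parts. First we establish the terminal condition and the zero-advantage identity $q^\theta(t,\mu,\theta)=0$, i.e.\ the PDE $\cL^\theta[V^\theta]+\bar r(\cdot,\cdot,\theta)=0$. Then we obtain \eqref{eq:constant_flow_necessary} from a chain rule (Itô's formula) for functions on Wasserstein space along the flow $s\mapsto \sP^{t,\mu,\theta'}_s$.

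\textbf{Terminal condition.} $V^\theta(T,\mu)=\bar g(\mu)$ holds by definition \eqref{eq:cost_theta} with $t=T$.

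\textbf{Chain rule along the flow.} For the identity $q^\theta(t,\mu,\theta)=0$ and for \eqref{eq:constant_flow_necessary}, the key tool is the chain rule for flows of marginal laws of McKean--Vlasov SDEs (see e.g.\ \cite{pham2018bellman}). For any $w\in C^{1,2}([0,T]\times\cP_2(\sR^n))$, any $\theta'$, and $s\in[t,T]$, it gives
\begin{equation*}
e^{-\beta s}w(s,\sP^{t,\mu,\theta'}_s)-e^{-\beta t}w(t,\mu)=\int_t^s e^{-\beta u}\,\cL^{\theta'}[w](u,\sP^{t,\mu,\theta'}_u)\,\d u .
\end{equation*}
This is Itô's formula applied to $w(u,\sP_{X_u})$, multiplied by the discount factor; the $-\beta w$ term in \eqref{eq:generator} accounts for differentiating $e^{-\beta u}$. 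It requires the integrability in the definition of $C^{1,2}$ together with the moment bounds $\sup_u \sE|X_u|^2<\infty$ coming from Assumption \ref{assum:continuity}. Moreover, the flow $\{\sP^{t,\mu,\theta'}_u\}_{u\in[t,T]}$ is $\cW_2$-continuous, hence compact, so the local bounds in the definition of $C^{1,2}$ apply uniformly along it.

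\textbf{Applying it with $w=V^\theta$.} Since $A[V^\theta](u,\nu,\theta')=\cL^{\theta'}[V^\theta](u,\nu)+\bar r(u,\nu,\theta')$, the integrand above equals $q^\theta-\bar r$ evaluated at $(u,\sP^{t,\mu,\theta'}_u,\theta')$. Rearranging gives exactly \eqref{eq:constant_flow_necessary} for every $\theta'$. So the second claim is just the chain rule and needs nothing about $V^\theta$ beyond the regularity in Assumption \ref{assum:smoothness}\ref{item:value_smooth}.

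\textbf{The identity $q^\theta(t,\mu,\theta)=0$.} We use the flow (semigroup) property: by weak uniqueness, $\sP^{s,\sP^{t,\mu,\theta}_s,\theta}_u=\sP^{t,\mu,\theta}_u$ for $u\ge s$. Consequently
\begin{equation*}
e^{-\beta t}V^\theta(t,\mu)=\int_t^s e^{-\beta u}\bar r(u,\sP^{t,\mu,\theta}_u,\theta)\,\d u+e^{-\beta s}V^\theta(s,\sP^{t,\mu,\theta}_s),
\end{equation*}
which is the invariance (dynamic programming) identity along the deterministic flow. Comparing it with \eqref{eq:constant_flow_necessary} at $\theta'=\theta$ yields
\begin{equation*}
\int_t^s e^{-\beta u}q^\theta(u,\sP^{t,\mu,\theta}_u,\theta)\,\d u=0 \quad\text{for all } s\in[t,T].
\end{equation*}
Dividing by $s-t$ and letting $s\downarrow t$ gives $q^\theta(t,\mu,\theta)=0$ for $t<T$. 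This limit uses continuity of $u\mapsto q^\theta(u,\sP^{t,\mu,\theta}_u,\theta)$, which follows from continuity of $q^\theta$ (Assumption \ref{assum:smoothness}\ref{item:differentable_theta} implies continuity) and $\cW_2$-continuity of the flow. The case $t=T$ then follows by continuity in $t$.

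\textbf{Main obstacle.} The delicate step is justifying the chain rule under the weak $C^{1,2}$ regularity assumed here. In particular, $\partial_\mu V$ and $\partial_v\partial_\mu V$ are only required to be continuous on $\operatorname{supp}(\mu)$ and bounded on compacts, so the integrability of $b\cdot\partial_\mu V$ and $\sigma\sigma^\top\colon\partial_v\partial_\mu V$ along the flow must be checked. We would cite the standard Itô formula along flows of measures (\cite{pham2018bellman}, or Carmona--Delarue) rather than reprove it, verifying its hypotheses via the $\cW_2$-compactness of the flow and the linear growth of $b$ and $\sigma$ from Assumption \ref{assum:continuity}.
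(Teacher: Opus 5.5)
Your proof is correct. The flow identity \eqref{eq:constant_flow_necessary} is obtained exactly as in the paper, by It\^o's formula along $u\mapsto \sP^{t,\mu,\theta'}_u$. As you note, that identity holds for any $w\in C^{1,2}$ with $q=A[w]$, so it carries nothing specific to $V^\theta$.

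The only real difference is in the identity $q^\theta(t,\mu,\theta)=0$. The paper simply invokes Lemma~\ref{lemma:pde}, which it recalls without proof. You instead derive the PDE yourself:
\begin{enumerate}[(i)]
\item you write the dynamic programming identity along the deterministic flow, using pathwise uniqueness to get $\sP^{s,\sP^{t,\mu,\theta}_s,\theta}_u=\sP^{t,\mu,\theta}_u$;
\item you compare it with the It\^o identity at $\theta'=\theta$, which gives $\int_t^s e^{-\beta u}q^\theta(u,\sP^{t,\mu,\theta}_u,\theta)\,\d u=0$ for all $s\in[t,T]$;
\item you use continuity to pass from this integral identity to the pointwise statement.
\end{enumerate}
This makes the argument self-contained. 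It also shows that only the existence half of Lemma~\ref{lemma:pde} is needed here; uniqueness matters only for Theorem~\ref{thm:sensitivity_critic}. Your separate treatment of $t=T$ by continuity fills a small gap the paper passes over, since Lemma~\ref{lemma:pde} asserts the PDE only on $[0,T)$.

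One minor correction: Assumption~\ref{assum:smoothness}\ref{item:differentable_theta} only asks that $\partial_\theta A[w]$ be continuous. It does not by itself give continuity of $q^\theta$ in $(t,\mu)$. What your limiting step actually uses is the hypothesis $q^\theta\in C([0,T]\times\cP_2(\sR^n)\times\sR^k)$, which is written into the statement.
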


Conversely, we show that the conditions \eqref{eq:bellman_param_necessary} and \eqref{eq:constant_flow_necessary} are sufficient to jointly characterize both the value function $V$ and the advantage rate function $A[V]$.
This derivation is based on two key properties in general MFC: the first being the invariance property of the value function for any given policy along the deterministic state flow in a neighborhood of  $\theta$, and the second being the continuity of  the 
 law of McKean–Vlasov systems with respect to the perturbations of the initial condition and model parameters.

\begin{thm}
\label{thm:sensitivity_critic}
 Suppose Assumptions   \ref{assum:continuity}  and 
 \ref{assum:smoothness}
 hold. 
Let $\theta\in \sR^k$,
  $\hat V\in C^{1,2}([0,T]\times \cP_2(\sR^n))$,
  and 
 $\hat q\in C([0,T]\times \cP_2(\sR^n)\times \sR^k)$ 
 satisfy the following conditions: for all 
 $(t,\mu)\in [0,T]\times \cP_2(\sR^n)$, 
\begin{equation}\label{eq:bellman_param}
        \hat{V}(T,\mu)=\bar g(\mu),\quad \hat{q}(t,\mu,\theta)=0,
    \end{equation}
and  there exists a neighborhood
$\cO_{t,\mu}(\theta)\subset \sR^k$
of $\theta$ such that for all $\theta'\in \cO_{t,\mu}(\theta) $ and $s\in [t,T]$,
    \begin{equation}
    \label{eq:constant_flow}
        e^{-\beta s }\hat{V}(s,\P^{t,\mu,\theta'}_s)
        - e^{-\beta t } \hat V(t,\mu)
        +\int_t^s e^{-\beta u}[\bar r (u,\sP^{t,\mu, \theta'}_{u},\theta')-\hat{q}(u,\sP^{t,\mu, \theta'}_{u},\theta')]\,\d u = 0,
    \end{equation}
where   
$(\sP^{t,\mu, \theta'}_{r})_{r\in [t,T]}$
is defined as in Theorem \ref{thm:sensitivity_critic_ncessary}. 
Then for all $(t,\mu,\theta')\in [0,T]\times \cP_2(\sR^n)\times  \cO_{t,\mu}(\theta) $,
$$
\hat V(t,\mu)=V(t,\mu,\theta), \quad \hat q(t,\mu,\theta')=A[V(\cdot, \cdot \theta)](t,\mu,\theta').
$$

\end{thm}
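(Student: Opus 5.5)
Our plan has two steps: first identify $\hat V$ with $V^\theta\coloneqq V(\cdot,\cdot,\theta)$ by using the flow identity \eqref{eq:constant_flow} at $\theta'=\theta$, and then identify $\hat q$ by differentiating \eqref{eq:constant_flow} in $s$ at $s=t$ via It\^o's formula on the Wasserstein space.

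\emph{Step 1: identification of $\hat V$.} Fix $(t,\mu)$ and take $\theta'=\theta\in\cO_{t,\mu}(\theta)$ and $s=T$ in \eqref{eq:constant_flow}. Since $\hat q(u,\cdot,\theta)=0$ and $\hat V(T,\cdot)=\bar g$, we get
\[
e^{-\beta t}\hat V(t,\mu)=\int_t^T e^{-\beta u}\bar r(u,\sP^{t,\mu,\theta}_u,\theta)\,\d u+e^{-\beta T}\bar g(\sP^{t,\mu,\theta}_T).
\]
By \eqref{eq:cost_theta}, the right-hand side equals $e^{-\beta t}V(t,\mu,\theta)$, so $\hat V=V^\theta$ on $[0,T]\times\cP_2(\sR^n)$. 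This step uses nothing beyond the definitions.

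\emph{Step 2: identification of $\hat q$.} Fix $\theta'\in\cO_{t,\mu}(\theta)$ and $\xi\sim\mu$. Because $V^\theta=\hat V\in C^{1,2}$ (Assumption \ref{assum:smoothness}), It\^o's formula along the flow of measures (chain rule for $s\mapsto \hat V(s,\sP^{t,\mu,\theta'}_s)$) gives
\[
e^{-\beta s}\hat V(s,\sP^{t,\mu,\theta'}_s)-e^{-\beta t}\hat V(t,\mu)=\int_t^s e^{-\beta u}\cL^{\theta'}[\hat V](u,\sP^{t,\mu,\theta'}_u)\,\d u.
\]
Subtracting this from \eqref{eq:constant_flow}, we obtain for all $s\in[t,T]$
\[
\int_t^s e^{-\beta u}\bigl[A[V^\theta](u,\sP^{t,\mu,\theta'}_u,\theta')-\hat q(u,\sP^{t,\mu,\theta'}_u,\theta')\bigr]\,\d u=0,
\]
using $\cL^{\theta'}[\hat V]+\bar r(\cdot,\cdot,\theta')=A[V^\theta](\cdot,\cdot,\theta')$. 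Dividing by $s-t$ and letting $s\downarrow t$ gives $\hat q(t,\mu,\theta')=A[V^\theta](t,\mu,\theta')$, provided the integrand is continuous in $u$ at $u=t$. For $t=T$ the interval degenerates, and we handle it by continuity instead: we apply the identity at points $(t_k,\mu)$ with $t_k\uparrow T$ and pass to the limit using the continuity of $\hat q$ and of $A[V^\theta]$. Care is needed here, because the neighborhood $\cO_{t_k,\mu}(\theta)$ may shrink as $k$ grows; if this causes trouble we will restrict to $t<T$ and extend using continuity of both functions on their domains.

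\emph{Main obstacle.} The delicate step is justifying the chain rule and the continuity of the integrand. It requires three ingredients. First, $u\mapsto\sP^{t,\mu,\theta'}_u$ is $\cW_2$-continuous, which follows from Assumption \ref{assum:continuity} and standard moment estimates. Second, the set $\{\sP^{t,\mu,\theta'}_u:u\in[t,T]\}$ is compact, so the local boundedness in the definition of $C^{1,2}$ applies. Third, we need continuity of $A[V^\theta](\cdot,\cdot,\theta')$, equivalently of $\cL^{\theta'}[V^\theta]$, along this flow. The derivatives $\partial_\mu V^\theta$ and $\partial_v\partial_\mu V^\theta$ are only continuous at points of the support, so we will invoke the standard It\^o formula for flows of measures under these $C^{1,2}$ conditions (as in \cite{pham2018bellman}), together with dominated convergence and the growth bounds of Assumption \ref{assum:continuity}. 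These give continuity of $u\mapsto \cL^{\theta'}[V^\theta](u,\sP^{t,\mu,\theta'}_u)$. Continuity of $\hat q$ composed with the flow is immediate from $\hat q\in C$.
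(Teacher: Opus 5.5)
Your Step 2 is the paper's argument: It\^o's formula for $\hat V$ along $\sP^{t,\mu,\theta'}$, subtraction from \eqref{eq:constant_flow}, and vanishing of a continuous integrand. Your identification of $\hat V$, however, takes a different and more direct route, and it is correct.

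The paper first proves $\hat q=A[\hat V]$ on $\cO_{t,\mu}(\theta)$ while $\hat V$ is still unknown. It then uses $\hat q(\cdot,\cdot,\theta)=0$ to see that $\hat V$ is a classical solution of the linear PDE \eqref{eq:pde_v}, and concludes by the uniqueness part of Lemma~\ref{lemma:pde}.

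You instead read off $\hat V=V(\cdot,\cdot,\theta)$ directly from \eqref{eq:constant_flow} at $\theta'=\theta$, $s=T$, using $\hat q(\cdot,\cdot,\theta)=0$ and $\hat V(T,\cdot)=\bar g$. This step needs none of the following:
\begin{itemize}
\item the $C^{1,2}$ regularity of $\hat V$;
\item any continuity of $\hat q$ or of the flow;
\item uniqueness for the PDE on $\cP_2(\sR^n)$, which the paper invokes without proof and whose standard proof is exactly this Feynman--Kac computation.
\end{itemize}
It also covers $t=T$ trivially.

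The paper's ordering buys a slightly more structural fact. The invariance identity alone forces $\hat q=A[\hat V]$ for any $C^{1,2}$ candidate $\hat V$, and the normalization $\hat q(\cdot,\cdot,\theta)=0$ enters only afterwards to pin down $\hat V$.

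On the endpoint $t=T$, your continuity fallback works for $\theta'=\theta$, and more generally for any $\theta'$ lying in $\cO_{t_k,\mu}(\theta)$ along some sequence $t_k\uparrow T$. It does not settle general $\theta'\neq\theta$. The reason is that \eqref{eq:constant_flow} is vacuous at $t=T$, so $\cO_{T,\mu}(\theta)$ is unconstrained, and nothing prevents the neighborhoods $\cO_{t_k,\mu}(\theta)$ from shrinking away from $\theta'$. The paper's ``set $u=t$'' step has exactly the same blind spot. So this is a looseness in the statement at $t=T$, not a defect of your approach.
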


For any given $\theta$, Theorem \ref{thm:sensitivity_critic} characterizes the associated value function and advantage rate function in a neighborhood of $\theta$, via the Feynman–Kac formula \eqref{eq:bellman_param} and the invariance principle \eqref{eq:constant_flow} along deterministic state flows. This provides a unified framework for characterizing value and advantage functions under arbitrary policies, and applies to both deterministic policies as in Section \ref{sec:DPG_MFC} and stochastic policies as in \cite{pham2025actor,wei2025continuous}.

\section{Deterministic Policy Gradient for Extended MFC Problems}
\label{sec:DPG_MFC}

In this section, we apply the sensitivity formula in Section \ref{sec:sensitivity} 
to derive a policy gradient formula for
extended mean field control (MFC) problems
studied in \cite{acciaio2019extended,picarelli2025extended}.
In  extended MFC problems, both the cost functional and the state dynamics depend on the joint distribution of the controlled state and control processes. This formula provides the foundation for developing a model-free deterministic actor--critic algorithm in Section~\ref{sec:alg} for solving extended MFC problems.

\subsection{Problem formulation}

\paragraph{Extended MFC problems.}
Let $T>0$   be a given terminal time and $(\Omega, \mathcal F,\mathbb P)$   be a complete probability space which
supports an $m$-dimensional Brownian motion $W$ and an independent square-integrable random
variable $\xi_0$. We denote by $\mathbb F = (\mathcal F_t)_{t\ge 0}$  the filtration generated by $W$ and $\xi_0$ augmented by the
$\mathbb P$-null sets.
Let $A\subset \mathbb R^d$  be a measurable set representing the agent’s action space, and let $\mathcal U$ be
a space of all $\mathbb F$-adapted square-integrable processes $\alpha:[0,T]\times \Omega\to A$ representing the agent’s
admissible control space.

For each $\alpha\in \mathcal U$, consider the associated state process $X^\alpha$
governed by the following controlled McKean-Vlasov dynamics:
\begin{align}
\label{eq:sde_mv_open_loop}
\begin{split}
 \d X^{\alpha}_s =b(s,X^{\alpha}_s,
 {\alpha}_s, \sP_{(X^{\alpha}_s,
 {\alpha}_s)})\d s+\sigma(s,X^{\alpha}_s,
 {\alpha}_s, \sP_{(X^{\alpha}_s,
 {\alpha}_s)})\d W_s,
  \quad X^{\alpha}_0=\xi_0,
 \end{split} 
\end{align}
where 
$b:[0,T]\times \sR^n 
\times \sR^d \times \cP_2(\sR^n\times \sR^d)\to \sR^n$
and 
$\sigma:[0,T]\times \sR^n 
\times \sR^d \times \cP_2(\sR^n\times \sR^d)\to    \sR^{n\times m}$ are sufficiently regular  functions such that 
\eqref{eq:sde_mv_open_loop} has a unique square-integrable strong solution $X^\alpha$.
The   agent aims to maximize the following
reward functional
\begin{align}
\label{eq:reward_mv_open_loop}
J(\alpha)  \coloneqq \sE \left[
\int_0^T 
e^{-\beta s}
r (s,X^{\alpha}_s,
 {\alpha}_s, \sP_{(X^{\alpha}_s,
 {\alpha}_s)})  \d s +
e^{-\beta T}
g(X^{\alpha}_T, \sP_{X^{\alpha}_T})
\right]
\end{align}
over all $\alpha\in \mathcal U$,
where 
$\beta \ge 0$, 
 $r:
[0,T]\times \sR^n 
\times \sR^d \times \cP_2(\sR^n\times \sR^d)\to \sR$ and 
$g:\sR^n 
  \times \cP_2(\sR^n)\to \sR$
are continuous functions with at most quadratic growth. 

It is known that under suitable regularity conditions (see e.g., \cite{pham2018bellman}), it suffices to optimize \eqref{eq:reward_mv_open_loop}
over   control
processes $\alpha^\varphi=(\alpha^\varphi_t)_{t\in [0,T]}$ given in closed loop (or feedback) form:
\begin{equation}
\label{eq:alpha_feedback}
\alpha^\varphi_t= \varphi(t,X^\varphi_t, \sP_{X^\varphi_t}), \quad t\in [0,T],
\end{equation}
where 
$\varphi: [0,T]\times \sR^n\times \cP_2(\sR^n)\to A$
is a measurable function, called a Markov policy, and  
$X^\varphi$ is a solution to the following controlled McKean-Vlasov dynamics:
\begin{align}
\label{eq:sde_mv_cl}
\begin{split}
 \d X^{\varphi}_s =& b(s,X^{\varphi}_s,
 \varphi(s,X^\varphi_s, \sP_{X^\varphi_s}), \sP_{(X^{\varphi}_s,
 \varphi(s,X^\varphi_s, \sP_{X^\varphi_s}))})\d s
 \\
 &+\sigma(s,X^{\varphi}_s,
\varphi(s,X^\varphi_s, \sP_{X^\varphi_s}), \sP_{(X^{\varphi}_s,
 \varphi(s,X^\varphi_s, \sP_{X^\varphi_s}))})\d W_s,
  \quad X^{\varphi}_0=\xi_0,
 \end{split} 
\end{align}
Consequently, the goal of the agent is to maximize the following objective
\begin{align}
\label{eq:reward_mv_cl}
J(\varphi)  \coloneqq \sE \left[
\int_0^T 
e^{-\beta s}
r (s,X^{\varphi}_s,
 \varphi(s,X^\varphi_s, \sP_{X^\varphi_s}), \sP_{(X^{\varphi}_s,
 \varphi(s,X^\varphi_s, \sP_{X^\varphi_s}))})  \d s +
e^{-\beta T}
g(X^{\varphi}_T, \sP_{X^{\varphi}_T})
\right]
\end{align}
over all  admissible Markov policies $\varphi$.

\paragraph{RL with  deterministic policies.}

In the RL framework, the agent does not know the coefficients $b,\sigma, r$ and $g$. Instead, the agent interacts directly with the system \eqref{eq:sde_mv_cl} with different actions and updates her policies based on the observed state and reward trajectories.  

As in the classical McKean-Vlasov control framework, we restrict attention   to suitable Markov policies $\varphi:[0,T]\times \sR^n\times \cP_2(\sR^n)\to A$  for optimizing \eqref{eq:reward_mv_cl}. In the RL literature, such policies are referred to as deterministic policies, since they map each time-state-measure triple directly to an action.
{
A deterministic feedback policy $\varphi(t,x, \mu)$ 
selects an action directly
based on the time $t$, state $x$ and   law $\mu$, and the associated state--action distribution is the push-forward measure $
    \Gamma_t^{\varphi}
    = \bigl(\operatorname{id},\varphi(t,\cdot,\mu)\bigr)_{\#}\mu$ defined in \eqref{eq:push_forward}.
The joint distribution is therefore completely determined by the state law and the feedback map. Although the resulting dynamics may still depend nonlinearly on $\Gamma_t^{\varphi}$, policy optimization can be reduced  to an optimization over  a parameterized family $\varphi_{\theta}$.} 

More precisely,
let $\operatorname{Lip}([0,T]\times \sR^n\times \cP_2(\sR^n); A)$
be   the set of  continuous functions $\varphi:[0,T]\times \sR^n\times \cP_2(\sR^n)\to A$ that are Lipschitz continuous in $(x,\mu)$, uniformly on $t\in [0,T]$. 
Given a class of parameterized policies $ \mathscr P_\Theta
\coloneqq \{\varphi_\theta \in \operatorname{Lip}([0,T]\times \sR^n\times \cP_2(\sR^n); A)\mid  \theta\in \sR^k\}$,  we maximize the following functional over $\theta\in \sR^k$:
\begin{align}
\label{eq:dpg_cost_theta}
  J(\theta)  \coloneqq \sE \left[
\int_0^T 
e^{-\beta s}
r (s,X^{\varphi_\theta}_s,
 {\varphi_\theta}(s,X^{\varphi_\theta}_s, \sP_{X^{\varphi_\theta}_s}), \sP_{(X^{\varphi_\theta}_s,
 {\varphi_\theta}(s,X^{\varphi_\theta}_s, \sP_{X^{\varphi_\theta}_s}))})  \d s +
e^{-\beta T}
g(X^{\varphi_\theta}_T, \sP_{X^{\varphi_\theta}_T})
\right],
\end{align}
where we write $J(\theta) = J(\varphi_\theta)$ with a slight  abuse 
of notation.

In the sequel, we propose model-free deterministic policy gradient (DPG) algorithms that 
optimize \eqref{eq:dpg_cost_theta} via a gradient ascent algorithm.

\subsection{Deterministic policy gradient formula}

\paragraph{Reformulation as parametric McKean-Vlasov dynamics.}
 To compute    $\nabla_\theta J(\varphi_\theta)$ 
 using  Theorems \ref
{thm:sensitivity} and 
\ref{thm:sensitivity_critic},
we rewrite  \eqref{eq:dpg_cost_theta}  as a special case of the parametric McKean-Vlasov dynamics analyzed  in Section \ref{sec:sensitivity}. 
As in \cite{pham2018bellman},
 for each policy $\varphi \in \mathscr P_\Theta$
and $(t,\mu)\in [0,T]\times \cP_2(\sR^n)$,
let   $(\id, \varphi (t,\cdot, \mu))_\sharp \mu\in \cP_2(  \sR^n\times \sR^d)$
be   the  induced  state-action   measure given by  
\begin{align}
\label{eq:push_forward}
     [(\id, \varphi (t,\cdot, \mu))_\sharp \mu ](B)&\coloneqq 
     \mu(\{x\in \sR^n\mid (x,\varphi (t,x,\mu))\in B\}), \quad \forall B\in \mathcal B(\sR^n\times \sR^d).
\end{align}
Note that for any   control $\alpha^\varphi$ of the form \eqref{eq:alpha_feedback},   the joint state-control distribution satisfies 
$$\sP_{(X^{\varphi}_t,
 {\alpha}^\varphi_t)}
 =(\id, \varphi (t,\cdot, \sP_{X^{\varphi}_t}))_\sharp \sP_{X^{\varphi}_t}, \quad t\in [0,T].
 $$
Define  
the following controlled  coefficients associated with 
$   \mathscr P_\Theta $: 
for all  $\ell\in \{b,\sigma, r\}$,
\begin{align}
\label{eq:coefficient_mfc_dpg}
    \ell^\varphi (t, x, \mu,\theta) &\coloneqq \ell (t,x,  \varphi_\theta(t,x,\mu),  (\id, \varphi_\theta(t,\cdot, \mu))_\sharp \mu ).
\end{align}
The  dynamics \eqref{eq:sde_mv_cl} with $\varphi=\varphi_\theta$ can then be rewritten as
\begin{align}
\label{eq:dpg_state_psi_theta}
\begin{split}
 \d X_s =& b^{\varphi}(s,X_s,\sP_{X_s}, \theta) \d s
 +\sigma^{\varphi}(s,X_s,\sP_{X_s}, \theta)  \d W_s.
 \end{split} 
\end{align}
Consider   the dynamic version of  \eqref{eq:dpg_cost_theta} defined as follows (cf.~\eqref{eq:cost_theta})
:  for all $(t,\mu)\in [0,T]\times \cP_2(\sR^n)$,
let $\xi\in L^2(\cF_t;\sR^n)$ with $\sP_{\xi}=\mu$ and   
$X^{t,\xi,\theta}$ satisfy  \eqref{eq:dpg_state_psi_theta} on $ [t,T]$ with 
$X^{t,\xi,\theta}_t=\xi$, and define 
\begin{align}
\label{eq:V_varphi_theta}
V^\varphi(t,\mu,\theta) &\coloneqq 
\int_t^T 
e^{-\beta (s-t)}
\bar r^\varphi \left(s,  \sP^{t,\mu,\theta}_s, \theta\right) 
\d s +
e^{-\beta (T-t)}
 \bar g ( \sP^{t,\mu,\theta}_T ),
\end{align}
where 
$\sP^{t,\mu,\theta}_s =\sP_{X^{t,\xi,\theta}_s}$, 
 and  
\begin{equation}
    \bar{r}^\varphi(t,\mu,\theta)\coloneqq \int_{\sR^n}  r^\varphi (t, x, \mu,\theta) \mu(\d x),\quad \bar{g}(\mu)\coloneqq \int_{\sR^n} g(x,\mu)\mu(\d x).
\end{equation}
For each 
$w\in C^{1,2}([0,T]\times \cP_2(\sR^n))$, define 
\begin{equation}
\label{eq:advantage_rate_varphi_theta}
A^\varphi[w](t,\mu,\theta) = \mathcal L^\varphi[w](t,\mu,\theta)+\bar{r}^\varphi(t,\mu,\theta),
\end{equation}
where $\mathcal L^\varphi$ is the generator of \eqref{eq:dpg_state_psi_theta} and is defined as in \eqref{eq:generator}
 with $(b,\sigma)=(b^\varphi,\sigma^\varphi)$. 
The function $A^\varphi[w]$
can be viewed as a mean-field analogue of the advantage rate function in continuous-time RL problems driven by classical diffusion processes \cite{cheng2025deterministic}.

In the sequel, we  assume that the model coefficients $  \{b,\sigma, r ,g \} $  and the policies in   $   \mathscr P_\Theta $  are sufficiently regular
such that 
the induced coefficients given in  \eqref{eq:coefficient_mfc_dpg} 
 satisfy 
Assumption 
\ref{assum:continuity},
and the associated   value and advantage rate functions 
satisfy 
Assumption  \ref{assum:smoothness}.

 \begin{assumption}
\label{assum:regularity_mfc}
 
   The functions  $  \{b^\varphi,\sigma^\varphi, r^\varphi ,g \} $  
    satisfy Assumption \ref{assum:continuity},
    and $V^\varphi$ in \eqref{eq:V_varphi_theta}  and $A^\varphi$  in \eqref{eq:advantage_rate_varphi_theta} satisfy Assumption \ref{assum:smoothness}.
    
\end{assumption}

\paragraph{Characterizations of DPG.}

We now present   the first characterization of the deterministic policy gradient (DPG),
which  
 combines Theorems \ref{thm:sensitivity} and \ref{thm:sensitivity_critic}.

\begin{thm}\label{thm:dpg_flow}
Suppose Assumption \ref{assum:regularity_mfc} holds, and let  $\varphi_\theta\in \mathscr P_\Theta$ for a  given $\theta\in \sR^k$. 
Let 
   $\hat V\in C^{1,2}([0,T]\times \cP_2(\sR^n))$ 
  and 
 $\hat q\in C([0,T]\times \cP_2(\sR^n)\times \sR^k)$ 
 satisfy the following conditions:
 for all 
 $(t,\mu)\in [0,T]\times \cP_2(\sR^n)$,
 \begin{enumerate}[(i)]
     \item 
 
$   \hat{V}(T,\mu)=\bar g(\mu)$ and   $\hat{q}(t,\mu,\theta)=0 $,   
\item         
   there exists a neighborhood
$\cO_{t,\mu}(\theta)\subset \sR^k$
of $\theta$ such that for all $\theta'\in \cO_{t,\mu}(\theta) $ and $s\in [t,T]$,
    \begin{equation}
    \label{eq:constant_flow_policy}
        e^{-\beta s }\hat{V}(s,\P^{t,\mu,\theta'}_s)
        - e^{-\beta t } \hat V(t,\mu)
        +\int_t^s e^{-\beta u}[\bar{r}^\varphi (u,\sP^{t,\mu, \theta'}_{u},\theta')-\hat{q}(u,\sP^{t,\mu, \theta'}_{u},\theta')]\,\d u = 0,
    \end{equation}
where   
$\sP^{t,\mu, \theta'}_{r}=\sP_{X^{t,\xi,\theta'}_r}$,
and 
$X^{t,\xi,\theta'}$ satisfies 
for all $s\in [t,T]$,
  \begin{align}
  \label{eq:sde_mv_perturbed}
\d X^{t, \xi,\theta'}_s &=b^\varphi (s,X^{t, \xi, \theta'}_s, \sP_{X^{t, \xi,\theta'}_s}, \theta')\d s+\sigma^\varphi(s,X^{t, \xi,\theta'}_s, \sP_{X^{t, \xi, \theta'}_s}, \theta')\d W_s,
  \quad X^{t, \xi,\theta'}_t=\xi,
\end{align}
with some $\xi\in L^2(\cF_t;\sR^n)$ having the law $ \mu$.
\end{enumerate}
Then for all $(t,\mu,\theta')\in [0,T]\times \cP_2(\sR^n)\times  \cO_{t,\mu}(\theta) $,
$$\hat V(t,\mu)=V^\varphi(t,\mu,\theta), \quad  
 \hat q(t,\mu,\theta')=A^\varphi[V^\varphi(\cdot,\theta)](t,\mu,\theta').
 $$
Moreover, 
let $\sP^\theta_{s}=\sP_{X^{\varphi_\theta}_s}$ with $X^{\varphi_\theta}$
defined by \eqref{eq:sde_mv_cl}, it holds that 
\begin{align}
 \begin{split}
& \nabla_\theta J(\varphi_\theta) 
  =  
    \int_0^T
   e^{-\beta s} 
   (\partial_\theta 
   \hat q)   
    (s, \sP^{ \theta}_s ,\theta) \, 
   \d s. 
 \end{split}
 \end{align}

\end{thm}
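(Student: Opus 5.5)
The proof is a direct specialization of the results of Section~\ref{sec:sensitivity} to the parametric McKean--Vlasov system \eqref{eq:dpg_state_psi_theta}. Under Assumption~\ref{assum:regularity_mfc}, the induced coefficients $(b^\varphi,\sigma^\varphi,r^\varphi,g)$ from \eqref{eq:coefficient_mfc_dpg} satisfy Assumption~\ref{assum:continuity}. The value function $V^\varphi$ in \eqref{eq:V_varphi_theta} is exactly the function \eqref{eq:cost_theta} built from these coefficients, and $A^\varphi$ in \eqref{eq:advantage_rate_varphi_theta} is exactly $A$ in \eqref{eq:advantage_rate} with generator $\mathcal L^\varphi$. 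Both satisfy Assumption~\ref{assum:smoothness}. So the whole setting of Section~\ref{sec:sensitivity} applies verbatim with $(b,\sigma,r)$ replaced by $(b^\varphi,\sigma^\varphi,r^\varphi)$. The only thing to check here is that the dynamics \eqref{eq:sde_mv_perturbed} coincide with \eqref{eq:sde_mv_population} for these coefficients, which is immediate from the definitions.

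\emph{Step 1 (identification of the critic).} Conditions (i) and (ii) are literally \eqref{eq:bellman_param} and \eqref{eq:constant_flow} for the induced coefficients, with $\bar r$ replaced by $\bar r^\varphi$. Theorem~\ref{thm:sensitivity_critic} then gives $\hat V(t,\mu)=V^\varphi(t,\mu,\theta)$ and $\hat q(t,\mu,\theta')=A^\varphi[V^\varphi(\cdot,\cdot,\theta)](t,\mu,\theta')$ for all $\theta'\in\cO_{t,\mu}(\theta)$.

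\emph{Step 2 (gradient formula).} First note that $J(\varphi_\theta)=V^\varphi(0,\sP_{\xi_0},\theta)$, and that $\sP^{0,\sP_{\xi_0},\theta}_s=\sP^\theta_s$ because \eqref{eq:sde_mv_cl} with $\varphi=\varphi_\theta$ is exactly \eqref{eq:dpg_state_psi_theta} started from $\xi_0$. Theorem~\ref{thm:sensitivity} applied at $(t,\mu)=(0,\sP_{\xi_0})$ then yields
\[
\nabla_\theta J(\varphi_\theta)=\int_0^T e^{-\beta s}\,\partial_\theta A^\varphi[V^\varphi(\cdot,\cdot,\theta)](s,\sP^\theta_s,\theta)\,\d s .
\]

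\emph{Step 3 (replacing $A^\varphi$ by $\hat q$ inside the derivative).} This is the main obstacle. Step~1 gives the identity $\hat q(s,\nu,\cdot)=A^\varphi[V^\varphi(\cdot,\cdot,\theta)](s,\nu,\cdot)$ on the neighborhood $\cO_{s,\nu}(\theta)$ of $\theta$, for each fixed $(s,\nu)$. The statement of Theorem~\ref{thm:sensitivity_critic} makes this identity explicit only in the third argument $\theta'$. It is still enough, however: the identity holds for every $\theta'$ in an open set containing $\theta$, with the first two arguments frozen at $(s,\nu)$. By Assumption~\ref{assum:smoothness}\ref{item:differentable_theta}, $A^\varphi[V^\varphi(\cdot,\cdot,\theta)]$ is differentiable in its last argument. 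Two functions of $\theta'$ that agree on an open neighborhood of $\theta$ have the same derivative at $\theta$, so $\hat q(s,\nu,\cdot)$ is differentiable at $\theta$ and $\partial_\theta\hat q(s,\nu,\theta)=\partial_\theta A^\varphi[V^\varphi(\cdot,\cdot,\theta)](s,\nu,\theta)$. Applying this with $\nu=\sP^\theta_s$ for each $s$ and substituting into Step~2 gives the claimed formula. The neighborhoods may depend on $(s,\nu)$, but that is harmless since only pointwise equality of the integrands is needed. Measurability and integrability in $s$ are inherited from the continuity of $\partial_\theta A^\varphi$ in Assumption~\ref{assum:smoothness} together with the continuity of $s\mapsto\sP^\theta_s$ in $\cW_2$.
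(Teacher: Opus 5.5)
Your proposal is correct and follows the paper's route: the paper obtains this theorem by applying Theorem~\ref{thm:sensitivity_critic} and Theorem~\ref{thm:sensitivity} to the induced coefficients $(b^\varphi,\sigma^\varphi,r^\varphi,g)$, exactly as in your Steps 1 and 2. Your Step 3, which gets $\partial_\theta\hat q(s,\nu,\theta)=\partial_\theta A^\varphi[V^\varphi(\cdot,\cdot,\theta)](s,\nu,\theta)$ from the two functions agreeing on the neighborhood $\cO_{s,\nu}(\theta)$, simply writes out a step the paper leaves implicit.
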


Theorem \ref{thm:dpg_flow} expresses the DPG in terms of the gradient of the advantage rate function with respect to the policy parameter $\theta$.
Both the value function and the advantage rate function are lifted to functions on the space of probability measures, and Theorem \ref{thm:dpg_flow} simultaneously characterizes them through an invariance principle along the flow of the state process  \eqref{eq:sde_mv_cl} controlled by $\varphi_\theta$.

{
{This theorem  is the continuous analogue of  \cite{motte2022mean, gu2023dynamic} for discrete-time extended MFC problems. In particular, \cite{gu2023dynamic} shows that 
the classical Q-function defined on the original action space cannot be characterized via the dynamic programming principle (DPP). To ensure the DPP,  one instead needs to construct a lifted Q-function called IQ-function on the space of policies defined by integrating the classical Q-function over the state--action distribution induced by a given policy}.
}

The next theorem refines Theorem~\ref{thm:dpg_flow} by decomposing the value function and the advantage rate function into integrals of local functions of the state, control, and their associated measures, and by characterizing these local functions through suitable martingale conditions.
We  impose the following regularity assumptions on these local functions:

\begin{assumption}
\label{assum:V_D_q_D}
Let 
   $\hat V_D\in C([0,T]\times \sR^n \times \cP_2(\sR^n))$ 
 be such that  $\hat V: [0,T]\times \cP_2(\sR^n)\to \sR$ given by
\begin{align} 
\label{eq:decompose_V}
 \hat V(t,\mu)\coloneqq \sE_{\xi\sim\mu}[ \hat V_D(t,\xi,\mu)] 
 \end{align}
  is well-defined and in the space 
 $C^{1,2}([0,T]\times \cP_2(\sR^n))$.
  Let  
 $\hat q\in C([0,T]\times \sR^n\times \sR^d\times \cP_2(\sR^n\times \sR^d))$
 and  the policy class $\mathscr P_\Theta $  be such  that  
$\hat q : [0,T]\times \cP_2(\sR^n)\times \sR^k \to \sR$ given by
\begin{align}
\label{eq:decompose_q}
 \hat q (t,\mu,\theta)\coloneqq \sE_{\xi\sim \mu}[\hat{q}_D(t,\xi,\varphi_{\theta}(t,\xi,\mu),(\id, \varphi_{\theta} (t,\cdot, \mu))_\sharp \mu)] 
\end{align}
   is well-defined and  continuous.  Moreover, $\partial_\theta \hat q$ exists,   is continuous, and satisfies the chain rule: 
 \begin{align}
 \label{eq:chain_rule_q}
 \begin{split}
   (\partial_\theta 
   \hat q)   
    (t, \mu,\theta) 
    &=
  \sE\bigg[  \partial_\theta\varphi_\theta(t,\xi,\mu)^\top 
 \Big\{   (\partial_a 
   \hat q_D)   
    (t, \xi,
\varphi_\theta(t, \xi,\mu),
 (\id, \varphi_{\theta} (t,\cdot, \mu))_\sharp \mu
) 
\\
&\quad +
\widetilde{\sE}\left[(\partial_\nu 
   \hat q_D)   
    \left(t, \widetilde{\xi}, 
\varphi_\theta(t,\widetilde{\xi},\mu),
 (\id, \varphi_{\theta} (t,\cdot, \mu))_\sharp \mu
\right)\right] (\xi,
\varphi_{\theta} (t,\xi, \mu)
) \Big\}\bigg],
\end{split}
 \end{align}
 where $\xi$ and $\widetilde{\xi}$ are    independent random variables with law $\mu$,
 $\widetilde{\sE}$ denotes the expectation with respect to $\widetilde{\xi}$,
and
$\partial_\nu \hat q_D(\cdot, \Gamma)$ is the partial  L-derivative of $\hat q_D$ with
respect to the second marginal of $\Gamma\in \cP_2(\sR^n\times \sR^d)$.

\end{assumption}

\begin{thm}\label{thm:martingale}

Suppose Assumption \ref{assum:regularity_mfc} holds, and  
   $\hat V_D\in C([0,T]\times \sR^n \times \cP_2(\sR^n))$,
 $\hat q_D\in C([0,T]\times \sR^n\times \sR^d\times \cP_2(\sR^n\times \sR^d))$ 
 and  the policy class $\mathscr P_\Theta $  
 satisfy Assumption \ref{assum:V_D_q_D}. 
Let  $\theta\in \sR^k$, and assume that $\hat V_D$ and $\hat q_D$ 
 satisfy the following conditions:
\begin{enumerate}[(i)]
\item
 For all 
 $(t,x,\mu)\in [0,T]\times \sR^n\times \cP_2(\sR^n)$,
\begin{equation}\label{eq:bellman}
    \hat{V}_D(T,x,\mu)=g(x,\mu),
\quad 
\sE_{\xi\sim \mu}[\hat{q}_D(t,\xi,\varphi_\theta(t,\xi,\mu),(\id, \varphi_\theta (t,\cdot, \mu))_\sharp \mu)]=0.
\end{equation}
\item 
 For all 
 $(t,\mu)\in [0,T]\times \cP_2(\sR^n)$,
there exists a neighborhood
$\cO_{t,\mu}(\theta)\subset \sR^k$
of $\theta$ such that for all $\theta'\in \cO_{t,\mu}(\theta) $,
the following process
\begin{align}
\label{eq:martingale}
    \begin{split}
     &\bigg( e^{-\beta s}\hat{V}_D(s,X_s^{t,\xi,\theta'},\sP_s^{t,\mu,\theta'})
        \\
        &\quad +\int_t^se^{-\beta u}(r-\hat{q}_D)(u,X_u^{t,\xi,\theta'},\varphi_{\theta'}(u,X_u^{t,\xi,\theta'},\sP_u^{t,\mu,\theta'}),
        (\id, \varphi_{\theta'} (u,\cdot, \sP_u^{t,\mu,\theta'}))_\sharp \sP_u^{t,\mu,\theta'})\d u\bigg)_{s\in [t,T]}
    \end{split}
\end{align}    
is an $\sF$-martingale, 
where $X^{t,\xi,\theta'}$
and $\sP^{t,\mu,\theta'}$
are defined as in Theorem \ref{thm:dpg_flow}.

\end{enumerate}
Then the functions $\hat V$ and $\hat q$ defined by 
\eqref{eq:decompose_V} and 
\eqref{eq:decompose_q}, respectively, satisfy
for all $(t,\mu,\theta')\in [0,T]\times \cP_2(\sR^n)\times  \cO_{t,\mu}(\theta)$,
\begin{equation} 
\label{eq:characterization_integrated}
\hat V(t,\mu)=V^\varphi(t,\mu,\theta),\quad  
\hat q(t,\mu,\theta') =A^\varphi[V^\varphi(\cdot,\theta)](t,\mu,\theta').
\end{equation}
 Moreover, 
\begin{align}\label{eq:policy_grad}
 \begin{split}
\nabla_\theta J(\varphi_\theta) 
&
  =  
 \sE \bigg[   \int_0^T
   e^{-\beta s} 
\partial_\theta\varphi_\theta(s,X_s^{\theta},\sP_s^{ \theta})^\top 
 \bigg\{   (\partial_a 
   \hat q_D)   
    (s, X_s^{ \theta},
\varphi_\theta(s,X_s^{ \theta},\sP_s^{ \theta}),
 (\id, \varphi_{\theta} (s,\cdot, \sP_s^{ \theta}))_\sharp \sP_s^{ \theta}
) 
\\
&\quad +
\widetilde{\sE}\left[(\partial_\nu 
   \hat q_D)   
    \left(s, \widetilde{X}_s^{ \theta}, 
\varphi_\theta(s,\widetilde{X}_s^{\theta},\sP_s^{\theta}),
 (\id, \varphi_{\theta} (s,\cdot, \sP_s^{\theta}))_\sharp \sP_s^{\theta}
\right)\right] ({X}_s^{\theta},
\varphi_{\theta} (s,{X}_s^{ \theta}, \sP_s^{\theta})
)
\bigg\}    
   \d s
   \bigg], 
 \end{split}
 \end{align}
where  $({X}_s^{\theta},\sP_s^{\theta})=(\widetilde{X}_s^{0,\sP_{\xi_0},\theta},
\sP_s^{0,\sP_{\xi_0},\theta})$, 
$\widetilde{X}_s^{\theta}$ is an independent copy of ${X}_s^{\theta}$,
and
$\partial_\nu \hat q_D(\cdot, \Gamma)$ is the partial  L-derivative of $\hat q_D$ with
respect to the second marginal of $\Gamma\in \cP_2(\sR^n\times \sR^d)$.
 
\end{thm}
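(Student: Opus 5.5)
The plan is to reduce Theorem \ref{thm:martingale} to Theorem \ref{thm:dpg_flow}. This means checking that the aggregated pair $(\hat V,\hat q)$, defined by \eqref{eq:decompose_V} and \eqref{eq:decompose_q}, satisfies conditions (i) and (ii) of Theorem \ref{thm:dpg_flow}. Once that is done, the identification \eqref{eq:characterization_integrated} and the gradient formula $\nabla_\theta J(\varphi_\theta)=\int_0^T e^{-\beta s}(\partial_\theta\hat q)(s,\sP^\theta_s,\theta)\,\d s$ follow immediately. Formula \eqref{eq:policy_grad} is then obtained by inserting the chain rule \eqref{eq:chain_rule_q} at $\mu=\sP^\theta_s$. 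We realise $\xi\sim\sP^\theta_s$ as $X^\theta_s$ and $\widetilde\xi$ as an independent copy $\widetilde X^\theta_s$, and use Fubini to exchange $\int_0^T$ and $\sE$. The integrability needed for Fubini follows from continuity of $\partial_\theta\hat q$ and compactness of the flow $s\mapsto \sP^\theta_s$ in $\cP_2$.

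\textbf{Condition (i).} Integrating the terminal condition $\hat V_D(T,x,\mu)=g(x,\mu)$ against $\mu$ gives $\hat V(T,\mu)=\bar g(\mu)$. The second identity in \eqref{eq:bellman} is literally $\hat q(t,\mu,\theta)=0$ by \eqref{eq:decompose_q}. Continuity and $C^{1,2}$ regularity of $\hat V,\hat q$ are supplied by Assumption \ref{assum:V_D_q_D}.

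\textbf{Condition (ii).} This is the main step. Fix $(t,\mu)$, $\theta'\in\cO_{t,\mu}(\theta)$ and $\xi\in L^2(\cF_t;\sR^n)$ with law $\mu$. Since the process in \eqref{eq:martingale} is an $\sF$-martingale on $[t,T]$, its expectation at time $s$ equals its expectation at time $t$, which is $e^{-\beta t}\sE[\hat V_D(t,\xi,\mu)]=e^{-\beta t}\hat V(t,\mu)$. At time $s$, the law of $X^{t,\xi,\theta'}_s$ is $\sP^{t,\mu,\theta'}_s$, so $\sE[\hat V_D(s,X^{t,\xi,\theta'}_s,\sP^{t,\mu,\theta'}_s)]=\hat V(s,\sP^{t,\mu,\theta'}_s)$. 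In the integral term, Fubini gives
\[
\sE\big[r(u,X_u,\varphi_{\theta'}(u,X_u,\sP_u),(\id,\varphi_{\theta'}(u,\cdot,\sP_u))_\sharp\sP_u)\big]=\bar r^\varphi(u,\sP^{t,\mu,\theta'}_u,\theta'),
\]
by \eqref{eq:coefficient_mfc_dpg}. The analogous $\hat q_D$ term equals $\hat q(u,\sP^{t,\mu,\theta'}_u,\theta')$ by \eqref{eq:decompose_q}. Equating the two expectations yields exactly \eqref{eq:constant_flow_policy}.

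\textbf{Main obstacle.} The delicate part is justifying the Fubini exchange and the finiteness of the expectations. A true martingale already gives integrability of each marginal, but we also need $\int_t^s\sE|(r-\hat q_D)(\cdots)|\,\d u<\infty$. For $r$ this follows from the quadratic growth in Assumption \ref{assum:continuity} (via Assumption \ref{assum:regularity_mfc}) together with $\sup_u\sE|X_u|^2<\infty$. For $\hat q_D$, I would first try to use the well-definedness and continuity of $\hat q$ in Assumption \ref{assum:V_D_q_D}, combined with continuity of $u\mapsto\sP_u^{t,\mu,\theta'}$ in $\cW_2$, to bound $u\mapsto\hat q(u,\sP_u,\theta')$. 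If absolute integrability is not implied directly, I would interpret the well-definedness in Assumption \ref{assum:V_D_q_D} as including integrability and state this explicitly. Beyond this point, the conclusion is a direct citation of Theorem \ref{thm:dpg_flow}.
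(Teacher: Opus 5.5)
Your proposal is correct and follows the paper's proof essentially step for step. You verify condition (i) of Theorem \ref{thm:dpg_flow} directly from \eqref{eq:bellman}. You then obtain \eqref{eq:constant_flow_policy} by taking expectations of the martingale \eqref{eq:martingale} at times $s$ and $t$, using Fubini and $\sP_{X^{t,\xi,\theta'}_s}=\sP^{t,\mu,\theta'}_s$. Finally you combine Theorem \ref{thm:dpg_flow} with the chain rule \eqref{eq:chain_rule_q}.

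The only difference is that you flag the integrability needed for the Fubini exchanges, which the paper takes for granted. One small correction there: boundedness of $s\mapsto\partial_\theta\hat q(s,\sP^\theta_s,\theta)$ controls only the iterated integral $\int_0^T e^{-\beta s}\sE[\cdots]\,\d s$, not absolute joint integrability. That final exchange is therefore best read as an iterated integral, or covered by an explicit integrability assumption.
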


{Analytically, 
{Theorem \ref{thm:martingale} is the continuous-time-space analogue of the 
discrete-time MFC with learning \cite{gu2023dynamic} where the  lifted Q-function (called IQ-function) 
defined on the space of policies 
is represented as a local Q-function and a  local policy  $h: \mathcal{S}\rightarrow \mathcal{P}(\mathcal{A})$.
These local value functions are analogous to the decomposed value functions studied in the MFC  literature (see, e.g., \cite{buckdahn2017mean}). 
This decomposition is also consistent with the idea of ``centralized training with decentralized execution" \citep{foerster2016learning, lowe2017multi}, which has been adopted to improve the computational efficiency of RL algorithms for MFC \citep{gu2025mean, bayraktar2025finite}.
In particular, it enables both the value function and the reward to be decomposed additively across individual observations. 
Specifically,
Theorem~\ref{thm:martingale} expresses  
the functions $ \hat V$  and $\hat q$  as integrals of a local value function $\hat V_D$ and a local advantage rate function $\hat q_D$, respectively, as given in \eqref{eq:decompose_V} and \eqref{eq:decompose_q}. 
Such local representations allow 
learning $\hat V_D$ and $\hat q_D$  from observed state and control trajectories, yielding a more informative learning signal than directly learning 
$\hat V$ and $\hat q$ from the flow of state laws as in \eqref{eq:constant_flow_policy}; see   Section    \ref{sec:alg}.
 
Note, however,
 only the integrated value and advantage functions  $\hat V$ and $\hat q$,  rather than their local counterparts, can be uniquely determined, as shown in \eqref{eq:characterization_integrated}. Importantly, these integrated quantities are exactly those needed for policy updates.
 
}}

\subsection{Existence  of $\hat V_D$ and $  \hat q_D$ in Theorem~\ref{thm:martingale}}

The function   $\hat V_D$ 
can be taken as the (decomposed) value function for a control problem with decoupled state dynamics studied as in \cite{buckdahn2017mean,frikha2025actor}, while the function 
$\hat q_D$ can be taken as the \emph{integrated Hamiltonian over the joint state–action distribution}.
To see it,
assume without loss of generality   that $\beta=0$. 
For all $(t,x,\mu)\in [0,T]\times \sR^n\times \cP_2(\sR^n)$,
 let $ \xi\in  L^2(\mathcal{F}_t; \sR^n)$ with law $\mu$, and define the following  value function associated with  the policy $\varphi_\theta$:
\begin{align}
\label{eq:decoupled_cost_mv_dpg}
V_D^\theta(t,x,\mu) \coloneqq \sE \left[
\int_t^T 
r^\varphi (s, X^{t,x, \mu, \theta}_s, \sP_{X^{t,\xi, \theta}_s},\theta)  \d s +
g(X^{t,x,\mu, \theta}_T, \sP_{X^{t,\xi, \theta}_T})
\right],
\end{align}
where
$X^{t, \xi,\theta}$
and 
$X^{t, x, \mu,\theta}$ satisfy the following decoupled state dynamics:
for all $s\in [t,T]$,
\begin{align}
\label{eq:decoupled_dynamics}
\begin{split}
\d X^{t, \xi,\theta}_s &=b^\varphi(s,X^{t, \xi,\theta}_s, \sP_{X^{t, \xi,\theta}_s}, \theta)\d s+\sigma^\varphi(s,X^{t, \xi,\theta}_s, \sP_{X^{t, \xi,\theta}_s},\theta)\d W_s,
  \quad X^{t, \xi,\theta}_t=\xi,
 \\
 \d X^{t, x, \mu,\theta}_s &=b^\varphi(s,X^{t, x, \mu,\theta}_s, \sP_{X^{t, \xi,\theta}_s},\theta)\d s+\sigma^\varphi(s,X^{t, x, \mu,\theta}_s, \sP_{X^{t, \xi,\theta}_s},\theta)\d W_s,
  \quad X^{t, x, \mu,\theta}_t=x.
 \end{split} 
\end{align}
Note that by the weak uniqueness of \eqref{eq:decoupled_dynamics}, 
the measures $(\sP_{X^{t, \xi,\theta}_s})_{s\in [t,T]}$ depend on $\xi$ only through its law $\mu$, which allows us to   regard   $X^{t, x, \mu,\theta}$ as a function of   $\mu$  without specifying the choice of   $\xi$.
Suppose that $\hat V_D$ is  sufficiently regular, by It\^o's formula \cite[Proposition 5.102]{carmona2018probabilistic},
\begin{align}
\label{eq:martingale_decoupled}
[t,T]\ni s\mapsto  M_s\coloneqq {V}^\theta_D(s,X_s^{t,\xi,\theta'},\sP_s^{t,\mu,\theta'}) 
 &  -\int_t^s 
  \mathcal L^\varphi_D[{V}^\theta_D] ( r,X_r^{t,\xi,\theta'},\sP_r^{t,\mu,\theta'},\theta') \d r
\end{align}
is  an $\sF$-martingale, where  
$\mathcal L^\varphi_D$ is the generator of \eqref{eq:sde_mv_perturbed} satisfying
 \begin{align}
 \label{eq:generator_decoupled}
 \begin{split}
 &\mathcal L^\varphi_D[ w](t,x,  \mu,\theta)
 \\
 &\coloneqq
 \partial_t w(t,x,\mu)
      +  b^\varphi(t,x,\mu,\theta)\cdot \partial_x w(t,x,\mu) +\frac{1}{2}   \sigma^\varphi[\sigma^\varphi]^\top  (t,x,\mu,\theta)\colon     
     \partial^2_x w(t,x,\mu)
     \\
    &\quad + \sE_{\xi\sim \mu}\left[  b^\varphi(t,\xi,\mu,\theta)\cdot \partial_\mu w(t,x,\mu) (
    \xi) +\frac{1}{2}  [\sigma^\varphi(\sigma^\varphi)^\top] (t,\xi,\mu,\theta)\colon     
     \partial_v \partial_\mu w(t,x,\mu)(\xi) \right].
     \end{split}
\end{align}
Define for all $(t,x,a,\Gamma)\in [0,T]\times\sR^n\times \sR^d\times \cP_2(\sR^n\times \sR^d)$,  $\mu(dx)\coloneqq \Gamma(\d x, \sR^d)$, and   
\begin{equation}
\label{eq:q_verification}
    \begin{aligned}
        & q_D(t,x,a, \Gamma)\\
       & \coloneqq \partial_t V_D^\theta(t,x,\mu)
      +  b(t,x,a, \Gamma)\cdot \partial_x V_D^\theta(t,x,\mu) +\frac{1}{2}   [\sigma \sigma^\top]  (t,x,a, \Gamma)\colon     
     \partial^2_x V_D^\theta(t,x,\mu) \\
     &\quad + \sE_{(\xi,\alpha)\sim \Gamma}\left[  b(t,\xi,\alpha,\Gamma)\cdot \partial_\mu V_D^\theta(t,x,\mu) (
    \xi) +\frac{1}{2}  [\sigma  \sigma^\top] (t,\xi,\alpha,\Gamma)\colon     
     \partial_v \partial_\mu V_D^\theta(t,x,\mu)(\xi) \right]
     \\
     &\quad +r(t,x,a,\Gamma).
    \end{aligned}
\end{equation}
Since 
$
    h^\varphi (t, x, \mu,\theta) = 
    h(t,x,\varphi_{\theta}(t,x,\mu),(\id,\varphi_{\theta}(t,\cdot,\mu))_\sharp\mu)
$ for all $h\in \{b,\sigma, r\}$,
  for all $(t,x,\mu)\in [0,T]\times \sR^n\times \cP_2(\sR^n)$ and $\theta'\in \sR^k$,
 \begin{equation*} 
    \begin{aligned}
        &\mathcal{L}_D^\varphi[V_D^\theta](t,x,\mu,\theta') = [q_D-r](t,x,\varphi_{\theta'}(t,x,\mu),(\id,\varphi_{\theta'}(t,\cdot,\mu))_\sharp\mu),
    \end{aligned}
\end{equation*}
which along with \eqref{eq:martingale_decoupled} implies that $q_D$ satisfies the martingale condition \eqref{eq:martingale}.
Applying It\^o's formula to $s\mapsto \sE[{V}^\theta_D(s,X_s^{t,\xi,\theta},\sP_s^{t,\mu,\theta})]$  
further shows that  $V_D$ and $q_D$ satisfy \eqref{eq:bellman},
which proves that  $V_D$ and $q_D$ are candidate functions satisfying Theorem \ref{thm:martingale}.

We emphasize that in \eqref{eq:q_verification}, it is essential to consider a  function  $q_D$ on a lifted space  $\cP_2(\sR^n\times \sR^d)$,
even when all coefficients are independent of the control distribution  as in classical MFC problems.
 This is because \eqref{eq:generator_decoupled}
requires simultaneously integrating the coefficients $b$, $\sigma$, and  the policy   $\varphi_\theta$
over the state component. The characterization of the advantage function thus  necessarily involves integrating with respect to the joint state–action law induced by the   policy $\varphi_\theta$.

\section{Model-free Advantage Actor-Critic Algorithm}\label{sec:alg}

Building on Theorem~\ref{thm:martingale}, we now develop a model-free   advantage actor–critic RL algorithm for extended MFC problems based on   neural network (NN) parameterizations.
We first outline the key components of the framework and then provide implementation details. In the sequel, we denote by  $V_\phi,q_\psi$ and $\varphi_\theta$    generic NN  approximations  of the value function, advantage rate function, and policy, respectively. 

\subsection{Algorithmic design}

\paragraph{Neural network architecture with measure  inputs.}

Although both Theorem~\ref{thm:dpg_flow} and Theorem~\ref{thm:martingale} characterize the DPG for a given policy, Theorem~\ref{thm:martingale} provides a more effective representation of the associated value and advantage rate functions than    Theorem~\ref{thm:dpg_flow}, thereby facilitating more efficient learning.

Specifically, Theorem~\ref{thm:dpg_flow} represents the value function $\hat V$ as a function of time and the state measure,
and 
the advantage rate function
$\hat q$ as a function of time, the state measure, and the policy parameter.
Such  measure-dependent functions can be approximated by cylinder functions based on finite-dimensional features of the distribution, namely
\begin{align}
\label{eq:V_q_NN}
\hat{V}(t,\mu)\approx NN_\phi(t,\E_{\xi\sim\mu}[\Psi(\xi)]),
\quad 
\hat{q}(t,\mu,\theta)
\approx NN_\psi(t,\E_{\xi\sim\mu}[\widetilde{\Psi}(\xi)],\theta),
\end{align}
where $NN_\phi$ and 
$NN_\psi$
denote  generic NNs, and $\Psi, \tilde{\Psi}:\R^n\to\R^m$ 
are some   prescribed or learnable feature maps  
\cite{germain2022deepsets, mekkaoui2026learning,soner2025learning}. 
However, updating the value and advantage rate functions according to \eqref{eq:constant_flow_policy} requires sampling multiple state trajectories to estimate the state distribution, which is not sample efficient.
Moreover, 
since NN policies typically involve high-dimensional parameter spaces and complex architectures, it is often challenging to design and learn $\hat q $
 as a function that directly maps policy parameters to the value of the advantage rate function.

In contrast, Theorem \ref{thm:martingale} introduces a stronger martingale condition to  identify the   value function $\hat V$
and    advantage rate function $\hat q$ through the local value function $\hat V_D$, and the local advantage rate function $\hat q_D$.
This motivates parameterizing  $\hat V_D$ and $\hat q_D$ directly  as follows:
\begin{align}
\label{eq:V_q_D_NN}
\hat{V}_D(t,x,\mu)\approx NN_\phi(t,x,\E_{\xi\sim\mu}[\Psi(\xi)]),
\quad 
\hat{q}_D(t,x,a,\Gamma)
\approx NN_\psi(t,x,a,\E_{(\xi,\alpha)\sim\Gamma}[\widetilde{\Psi}(\xi,\alpha)]).
\end{align}
These neural networks can be viewed as feature extractors, enabling accurate approximations of   $\hat V$ and $\hat q$ even with simple feature maps $\Psi$ and 
$\widetilde{\Psi}$, such as polynomials. 
Moreover, the parameterization of $\hat q_D$ in \eqref{eq:V_q_D_NN}  
takes action as an input variable rather than the policy parameter as in \eqref{eq:V_q_NN}.
Since the action space is typically much lower-dimensional than the policy parameter space, and more directly related to the state dynamics, parameterizing the map from action to the value of the advantage rate function leads to a simpler approximation problem and improves learning efficiency.
Finally, 
the martingale criterion \eqref{eq:martingale} also yields more informative temporal-difference updates of the NNs in \eqref{eq:V_q_D_NN} directly based  on observed state and control trajectories, thereby improving learning efficiency, as shown in Section~\ref{sec:experiment}.

Based on the above observations,  
we adopt 
\eqref{eq:V_q_D_NN}
to parameterize  
$\hat{V}_D$ and $\hat{q}_D$, and 
denote  them by $V_\phi $ and $ q_\psi $, respectively.

\paragraph{Learning critics.}
We   train $V_\phi $ and $ q_\psi $ 
so that they satisfy the conditions   \eqref{eq:bellman} and \eqref{eq:martingale}. Following similar arguments as   in \cite{jia2023q, wei2025continuous, cheng2025deterministic, guo2026deterministic}, the martingale condition \eqref{eq:martingale} can be enforced by updating      $V_\phi$ and $q_\psi$ through the following temporal-difference learning scheme:
\begin{equation}\label{eq:td_v}
    \phi\leftarrow\phi-\eta\E\left[\partial_\phi V_\phi(t,x_{t},\mu_t) \Big(V_\phi(t,x_{t},\mu_t)-[r-q_\psi](t,x_{t},a_t,\Gamma_t)h-e^{-\beta h}V_\phi(t+h,x_{t+h},\mu_{t+h})\Big)\right],
\end{equation}
\begin{equation}\label{eq:td_q}
    \psi\leftarrow\psi-\eta\E\left[\partial_\psi q_\psi(t,x_{t},a_t,\Gamma_t) \Big(V_\phi(t,x_{t},\mu_t)-[r-q_\psi](t,x_{t},a_t,\Gamma_t)h-e^{-\beta h}V_\phi(t+h,x_{t+h},\mu_{t+h})\Big)\right],
\end{equation}
where $h>0$ is the   time step size, and  
$(x_t,\mu_t)$ represents the state process and its distribution at time $t$. 
To meet the Bellman constraint \eqref{eq:bellman}, 
we re-parameterize the advantage rate function 
\begin{equation}\label{eq:repara_q}
    q_\psi\big(t,x,a,\Gamma\big)\coloneqq\bar{q}_\psi\big(t,x,a,\Gamma\big)-\E_{\xi\sim\mu}\bar{q}_\psi\Big(t,\xi,\varphi_\theta(t,\xi,\mu),(\id,\varphi_\theta(t,\cdot,\mu))_\sharp\mu)\Big),
\end{equation} 
where $\bar{q}$ is an NN of the form \eqref{eq:V_q_D_NN},
and $\varphi_\theta$ represents the current deterministic policy.  
To enforce 
  the terminal condition in \eqref{eq:bellman},
we introduce a penalty term
\begin{equation}
    \E (V_\phi(T,x_T,\mu_T)-g(x_T,\mu_T))^2,
\end{equation}
and minimize it with respect to $\phi$ via stochastic gradient descent based on the observed $ x_T,\mu_T$ and $g(x_T,\mu_T)$.

\paragraph{Exploration.}

The use of a deterministic actor does not eliminate exploration. During training, exploration can be introduced either by perturbing the actions generated by the current policy (see e.g., \cite{szpruch2024optimal, cheng2025deterministic}) or by perturbing the parameters of the policy network \citep{plappert2017parameter}. The essential distinction is that randomness is used as a data-collection mechanism rather than being encoded into the policy class itself. The learned policy remains a deterministic feedback control, while exploration can be adjusted according to the geometry of the action and parameter spaces. 


In our implementation, 
 given $\sigma_{\text{epl}}>0$,
we consider the following two exploration mechanisms: 
\begin{itemize}
   
    \item Approach I (action space exploration): Perform $a_t\sim \mathcal{N}(\varphi_\theta(t,x_t,\mu_t),\sigma_{\text{epl}}^2I)$.
     \item Approach II (parameter space exploration): Perform $a_t= \varphi_{\theta'}(t,x_t,\mu_t)$ where $\theta'\sim\mathcal{N}(\theta,\sigma_{\text{epl}}^2I)$. 
\end{itemize}

Approach I explores the action space by perturbing deterministic policies with noise, which is the standard approach in single-agent RL (see e.g., \cite{szpruch2024optimal, cheng2025deterministic}).
 In general, this approach may not  fully explore the parameter space, especially when  the Jacobian   $\partial_\theta 
 \varphi_\theta$  is  non-invertible. 
 
Approach II explores the parameter space and is more consistent with Theorem~\ref{thm:martingale}. However, as shown in \citep{plappert2017parameter}, its performance is more  sensitive to the choice of noise scale $\sigma_{\text{epl}}$ than Approach I. This is because the effect of perturbing the parameters on the induced actions depends on the geometry of the policy parameterization, which is locally governed by the Jacobian of the policy network. Consequently, depending on the network architecture, parameter-space noise can lead to unstable and unpredictable behavior in the action space, necessitating careful tuning of the exploration noise scale.

 Our experiments  suggest that, with appropriate tuning of the perturbation scale in Approach II, the two approaches achieve comparable performance in standard mean field control problems. However, in certain extended MFC problems, the dependence on the control law can make Approach I less effective, resulting in slower convergence.
  See Section \ref{subsec:liquidation} for details.


\subsection{Implementation details}

\paragraph{McKean-Vlasov dynamics simulator.}

We construct a black-box simulator of the McKean–Vlasov dynamics \eqref{eq:sde_mv_cl} using a standard particle approximation with Euler–Maruyama discretization \cite{reisinger2024fast}, and observe the resulting state and control trajectories as well as the associated rewards.

Specifically, 
let $h>0$ be the time step size, 
and $M\in \mathbb N$
be the number of  homogeneous agents (particles). 
Let $(x_{kh,j},a_{kh,j})$   be the state-action of the $j$-th agent at time $kh$, for $j=1,\ldots, M$, define the empirical measures   
$ 
    \hat \mu_{kh}\coloneqq \frac{1}{M}\sum_{j=1}^M\delta_{x_{kh,j}}$ and  $\hat 
\Gamma_{kh}\coloneqq\frac{1}{M}\sum_{j=1}^M\delta_{(x_{kh,j},a_{kh,j})}$, and simulate the   states at time $(k+1)h$ by 
\begin{equation}
    x_{(k+1)h,j}\coloneqq x_{kh,j}+  b(kh,x_{kh,j},a_{kh,j},\hat \Gamma_{kh})h+\sigma(kh,x_{kh,j},a_{kh,j},\hat \Gamma_{kh})\sqrt{h} Z_{kh,j},\quad 1\leq j\leq M,
\end{equation}
where $\{Z_{kh,j} \}_{ k=0,\ldots, \lceil T/h\rceil, j=1,\ldots, M} $ are independent $m$-dimensional standard normal random vectors. The observed  instantaneous reward  at time $kh$ is given by $r_{kh,j}=r(kh,x_{kh,j},a_{kh,j},\hat \Gamma_{kh})$,
and the reward at time $T$ is given by
$r_{T,j}=g(x_{T,j},\hat \mu_{T})$, for all $j=1,\ldots, M$.

\paragraph{Critic update.}
Due to the off-policy nature of Algorithm \ref{alg:ddpg}, 
we store 
the observed transition $\big(kh,x_{kh,j},a_{kh,j},r_{kh,j}, x_{(k+1)h,j}\big)_{j=1}^M$  in a replay buffer $\mathcal{R}$, and re-sample them later to update actor and critic. 
We also  store the terminal state  and reward  $(Kh,x_{Kh,j},r_{Kh,j})_{j=1}^M$ in $ \mathcal{R}$.
For brevity, we denote by   
$\tilde{x}_t$ the concatenation of time and state $(t,x_t)$. 
 We also employ a target value network $V_{\phi^{tgt}}$, 
whose parameters are updated as an exponential moving average of the value network parameters. This technique is widely used in modern deep RL algorithms, including DDPG \citep{lillicrap2015continuous} and SAC \citep{haarnoja2018soft}, to improve training stability.

During each episode, after every $m\geq 1$ steps of simulation, we sample a batch of transitions $ \{\mathcal{D}^{(i)}\}_{i=1}^B$ from replay buffer $\mathcal{R}$ 
where  
$\mathcal{D}^{(i)}=\big(\tilde{x}^{(i)}_{k_ih,j}, a^{(i)}_{k_ih,j}, r^{(i)}_{k_ih,j},\tilde{x}^{(i)}_{(k_i+1)h,j}\big)_{j=1}^M$,
corresponding to different trajectories and time indices, 
and define   the martingale loss by
\begin{align}\label{eq:martingale_loss}
    \begin{split}
    &\mathcal{L}^{\mathcal{M}} \coloneqq\frac{1}{BM}\sum_{i=1}^B\sum_{j=1}^M \Big(V_\phi(\tilde{x}^{(i)}_{k_ih,j},\hat \mu^{(i)}_{k_ih})-[r^{(i)}_{k_ih,j}-q_\psi(\tilde{x}^{(i)}_{k_ih,j},a^{(i)}_{k_ih,j},\hat \Gamma^{(i)}_{k_ih})]h-e^{-\beta h}V_{\phi^{tgt}}(\tilde{x}^{(i)}_{(k_i+1)h,j},\hat  \mu^{(i)}_{(k_i+1)h})\Big)^2,
    \end{split}
\end{align}
where $q_\psi$ is given by  \eqref{eq:repara_q}.
Note that $\partial_\phi\mathcal{L}^\mathcal{M}$ and $\partial_\psi\mathcal{L}^\mathcal{M}$ are stochastic estimates of the semi-gradients in \eqref{eq:td_v} and \eqref{eq:td_q}, respectively.
To enforce the terminal constraints, we also sample a batch of terminal states $\{\mathcal{D}_{ter}^{(i)}\}_{i=1}^B$ from $\mathcal{R}$   from different trajectories
and compute the terminal loss
\begin{equation}\label{eq:terminal_loss}
    \mathcal{L}^\mathcal{T}=\frac{1}{BM}\sum_{i=1}^B\sum_{j=1}^M(V_\phi(\tilde{x}^{(i)}_{Kh,j},\hat \mu^{(i)}_{Kh})-r^{(i)}_{Kh,j})^2.
\end{equation}
The   losses  $\mathcal{L}^\mathcal{M}$ and  $\mathcal{L}^\mathcal{T}$ are combined to update the critics $V_\phi$ and $q_\psi$
via gradient descent.

\paragraph{Actor update.}
To implement the exact policy gradient of the current policy $\varphi_\theta$ given in \eqref{eq:policy_grad},  we sample a batch $\{\mathcal{D}^{(i)}\}_{i=1}^B$ from $\mathcal{R}$,
and define the  actor objective by
\begin{equation}\label{eq:policy_loss}
    \mathcal{L}^\mathcal{A}=\frac{1}{MB}\sum_{j=1}^M\sum_{i=1}^B q_\psi\big(\tilde{x}_{k_ih,j},\varphi_\theta(\tilde{x}_{k_ih,j},\hat \mu_{k_ih}),\Gamma^\theta_{k_ih}\big)h,\quad \Gamma^\theta_{k_ih}\coloneqq\frac{1}{M}\sum_{j=1}^M\delta_{(x_{k_ih,j},\varphi_\theta(\tilde{x}_{k_ih,j},\mu_{k_ih}))}. 
\end{equation}
Note that $\partial_\theta\mathcal{L}^\mathcal{A}$ is a stochastic estimator of policy gradient \eqref{eq:policy_grad}.

\paragraph{Full algorithm.}

\begin{algorithm}[!htbp]
    \caption{\textbf{C}ontinuous \textbf{T}ime \textbf{D}eep \textbf{D}eterministic \textbf{P}olicy \textbf{G}radient}
    \label{alg:ddpg}
    \begin{algorithmic}
        \STATE{{\bfseries Inputs:} Discretization stepsize $h$, horizon $K=T/h$, number of episodes $N$, number of agents $M$, policy net $\varphi_\theta$, advantage-rate net $\bar{q}_\psi$, value net $V_\phi$, update frequency $m$, exploration noise $\sigma_{\text{epl}}$, soft update parameter $\tau$, learning rate $\eta$, batch size $B$, terminal   constraint weight $w$}
            \STATE{{\bfseries Initialization:}
                $\phi,\psi,\theta$, target $\phi^{tgt}=\phi$, and replay buffer $\mathcal{R}=\emptyset$
            }
            \FOR{$n = 1, \cdots, N$}
                \STATE{Observe      states $\big(\tilde{x}_{0,j}\big)_{j=1}^M$}
                \FOR{$k=0,\cdots,K-1$}
                    \STATE{$\triangleright$\colorbox{Ocean}{\emph{Take action}}}
                    \STATE{
                        \textbf{Option I.} Perform $a_{kh,j}\sim \mathcal{N}(\varphi_\theta(\tilde{x}_{kh,j},\hat \mu_{kh}),\sigma_{\text{epl}}^2I)$ for each $j\in[M]$
                    }
                    \STATE{
                        \textbf{Option II.} Perform $a_{kh,j}= \varphi_{\theta'}(\tilde{x}_{kh,j},\hat \mu_{kh})$ where $\theta'\sim\mathcal{N}(\theta,\sigma_{\text{epl}}^2I)$ for each $j\in[M]$
                    }
                    \STATE{
                         Observe $\mathcal{D}=\big(\tilde{x}_{kh,j},a_{kh,j},r_{kh,j}, \tilde{x}_{(k+1)h,j}\big)_{j=1}^M$ and store them in $\mathcal{R}$
                    }
                    \IF{$k \equiv 0 \textbf{ mod } m$}
                    \STATE{$\triangleright$\colorbox{Ocean}{\emph{Update critics}}}
                    
                    \STATE{
                        Sample $\{\mathcal{D}^{(i)}\}_{i=1}^B$ from $\mathcal{R}$
                        and compute $\mathcal{L}^\mathcal{M}$ as in \eqref{eq:martingale_loss}
                    }
                    
                    \STATE{
                        Sample terminal states $\{\mathcal{D}_{ter}^{(i)}\}_{i=1}^B$ from $\mathcal{R}$
                        and compute $\mathcal{L}^\mathcal{T} $ as in \eqref{eq:terminal_loss}
                    }
                    \STATE{
                        Update the critics: $\psi\leftarrow\psi-\eta\partial_\psi \mathcal{L}^M$,
                        $\phi\leftarrow\phi-\eta\partial_\phi (\mathcal{L}^M+w\mathcal{L}^\mathcal{T})$
                    }
                      \STATE{
                        Update the target: 
                        $\phi^{tgt}\leftarrow \tau \phi+(1-\tau)\phi^{tgt}$
                    }\STATE{$\triangleright$\colorbox{Ocean}{\emph{Update actor}}}
                    
                    \STATE{
                        Sample $\{\mathcal{D}^{(i)}\}_{i=1}^B$ from $\mathcal{R}$
                        and compute the policy loss $\mathcal{L}^\mathcal{A} $ as in \eqref{eq:policy_loss}
                    }
                    \STATE{
                        Update the actor: $\theta\leftarrow \theta+\eta \partial_\theta\mathcal{L}^\mathcal{A}$
                    }
                  
                    \ENDIF
                \ENDFOR
            \ENDFOR
    \end{algorithmic}
\end{algorithm}

The full procedure of  \textbf{C}ontinuous \textbf{T}ime \textbf{D}eep \textbf{D}eterministic \textbf{P}olicy \textbf{G}radient (CT-DDPG) for extended MFC problems is summarized in Algorithm \ref{alg:ddpg}.

\section{Numerical Experiments}

\label{sec:experiment}

In this section, we illustrate the efficiency of the proposed CT-DDPG algorithm within the general learning MFC framework. Unless otherwise specified, CT-DDPG refers to the implementation with action space exploration (Option I in Algorithm \ref{alg:ddpg}).

\subsection{Consensus control of Cucker-Smale model}
We consider the consensus control of multi-dimensional stochastic Cucker-Smale (C-S) models studied in \citep{nourian2011mean,reisinger2025convergence}. 
In this problem, 
  the agent  aims to enforce consensus
emergence of an interactive particle system via   external intervention. 

Let $T>0$, and consider the  following $2n$-dimensional controlled McKean-Vlasov dynamics, which can be viewed as the large
population limit of the finite-particle model studied in \cite{cucker2007emergent}:
for all $t\in [0,T]$,  
\begin{equation}
\label{eq:cs_model}
    \d z_t=v_t\d t,\qquad \d v_t=\left(\alpha_t+\int_{\sR^n\times \sR^n} \kappa(z_t,v_t,z',v')\P_{(z_t,v_t)}(\d z',\d v')\right)\d t+\sigma\d W_t,
\end{equation}
with an initial state $(z_0,v_0)\in L^2(\cF_0;\sR^n\times \sR^n)$, 
where $\sigma\in\R^{n\times n}$, $W$ is an $n$-dimensional Brownian motion,   and the interaction kernel $\kappa:\R^n\times\R^n\times\R^n\times \R^n\to \R^n$ is defined by
\begin{equation}
    \kappa(z,v,z',v')=\frac{C(v'-v)}{(1+\|z'-z\|^2)^\gamma},\quad \ \text{with some }C,\gamma\geq 0.
\end{equation}
The variables $z$ and $v$ represent the position and velocity of a representative particle, respectively. 
The dynamics \eqref{eq:cs_model} models the self-organization behavior of a crowd, where the interaction kernel $\kappa$ captures the decay of influence between agents as their distance increases. It is known that, in the absence of external control, flocking behavior, namely, the convergence of the velocity trajectories $v$ to a common limit as $t\to \infty$, occurs only when the parameter $\gamma$ is sufficiently small.

The aim of the agent  is to either induce consensus on models that would otherwise diverge, or to
accelerate the  self-organization behavior. Specifically, given $c>0$, 
we consider  maximizing the following   reward functional over all adapted controls $\alpha$:
\begin{equation}
    J(\alpha) = -\E\left[\int_0^T ( |v_t-\E[v_t] |^2+c |\alpha_t |^2)\d t +  |v_T-\E[v_T] |^2\right] 
\end{equation}
Note that in the special case with  $\gamma=0$, it is a linear-quadratic (LQ)  mean field   control problem.
In this case, the    optimal policy is linear and given  explicitly   by  (see e.g.,  \citep{yong2017linear}):
\begin{equation}
\label{eq:CS_LQ}
    \varphi^*(t,z,v,\mu)=-\frac{P(t)}{c}(v-\E_{v'\sim\mu}[v']),
\end{equation}
where $\frac{\d}{\d t}P(t)-\frac{1}{c}P(t)^2-2CP(t)+1=0$ with $P(T)=1$.

\paragraph{Baselines.}
We compare our \textbf{CT-DDPG} with existing mean-field RL algorithms, including \textbf{Actor-Critic (AC)} \citep{frikha2025actor}, which requires the action dependence of dynamics to be explicitly known, and \textbf{q-Learning} \citep{wei2025continuous}, which relies on a closed-form policy given by state-dependent Gibbs measures.
Due to the model-aware nature of these two methods, and especially their incompatibility with deep RL framework, we evaluate them only in the LQ case.
In addition to the default procedures described in Algorithm \ref{alg:ddpg}, we also consider a variant \textbf{CT-DDPG-Flow}, which learns the value function $\hat{V}$ directly based on the deterministic flow characterization in Theorem \ref{thm:dpg_flow}, while still using the decoupled advantage rate estimator $\hat{q}_D$ to represent $\hat q$.

\paragraph{Model architecture.}
Across all experiments, the policy, advantage network, and value network in CT-DDPG and its variants are implemented as three-layer fully connected MLPs with ReLU activations and hidden width $400$. To incorporate temporal information, we augment the environment observations with a sinusoidal embedding, yielding $\tilde{x}_t=(\cos(\tfrac{2\pi t}{T}),\sin(\tfrac{2\pi t}{T}),z_t,v_t)$, where $T$ denotes the maximum horizon. To embed the mean-field distribution, the default parameterization of CT-DDPG is given by
\begin{equation}\label{eq:value_param}
    V_\phi(\tilde{x}_t,\mu_t)=NN_{\phi_1}\left(\tilde{x}_t,\E_{x'\sim\mu_t}NN_{\phi_2}(x')\right),  \tag{CT-DDPG}
\end{equation}
where $\phi=(\phi_1,\phi_2)$ and similarly for $\varphi_\theta$ and $\bar{q}_\psi$.
Here we use a learnable feature map to preserve the model-agnostic nature of our method.
We also consider a variant based on prescribed moment features of the distribution, referred to as \textbf{CT-DDPG-Moment}:
\begin{equation}\label{eq:value_param_moment}
    V_\phi(\tilde{x}_t,\mu_t)=NN_\phi\left(\tilde{x}_t,\E_{x'\sim\mu_t}[x'],\Var_{x'\sim\mu_t}[x']\right),  \tag{CT-DDPG-Moment}
\end{equation}
As a benchmark, 
we further evaluate a variant that incorporates prior knowledge of the dynamics through a hand-crafted feature map, which we denote by \textbf{CT-DDPG-Kernel}:
\begin{equation}\label{eq:value_param_kernel}
    V_\phi(\tilde{x}_t,\mu_t)=NN_\phi\left(\tilde{x}_t,\E_{(z',v')\sim\mu_t}\left[\frac{v'-v_t}{(1+\|z'-z_t\|^2)^\gamma}\right]\right),  \tag{CT-DDPG-Kernel}
\end{equation}
This variant requires the exponent $\gamma$ to be specified a priori, and is therefore less model-agnostic than the default parameterization. Nevertheless, all these variants remain model-free in the sense that they do not exploit closed-form expressions for the optimal policy or value function, in contrast to previous works such as \cite{frikha2025actor,wei2025continuous}.

For AC and q-Learning, we follow \cite{frikha2025actor,wei2025continuous} respectively and adopt the model-aware LQ parameterization. Specifically, the value function is parameterized as a quadratic function of the state and the policy as a linear function of the state with time dependence incorporated through a three-layer MLP based embedding.

\paragraph{Training hyperparameters.}
We set dimension $n=3$ and simulate $M=50$ homogeneous agents to approximate the McKean-Vlasov dynamics. To accelerate training, we run 8 environments in parallel, i.e., collecting 8 trajectories per episode. All neural networks are optimized by Adam with a learning rate of $3\times 10^{-4}$ and a batch size of $B=256$. The update frequency is $m=1$. The soft target update parameter is $\tau=0.1$. The weight for the terminal value constraint is $w=0.002$. And the default exploration noise scale is $\sigma_{\text{epl}}=0.1$.
The entropy regularization coefficient in AC and q-Learning is $0.01$.

\begin{figure}[!ht]
    \centering
    \includegraphics[width=0.9\linewidth]{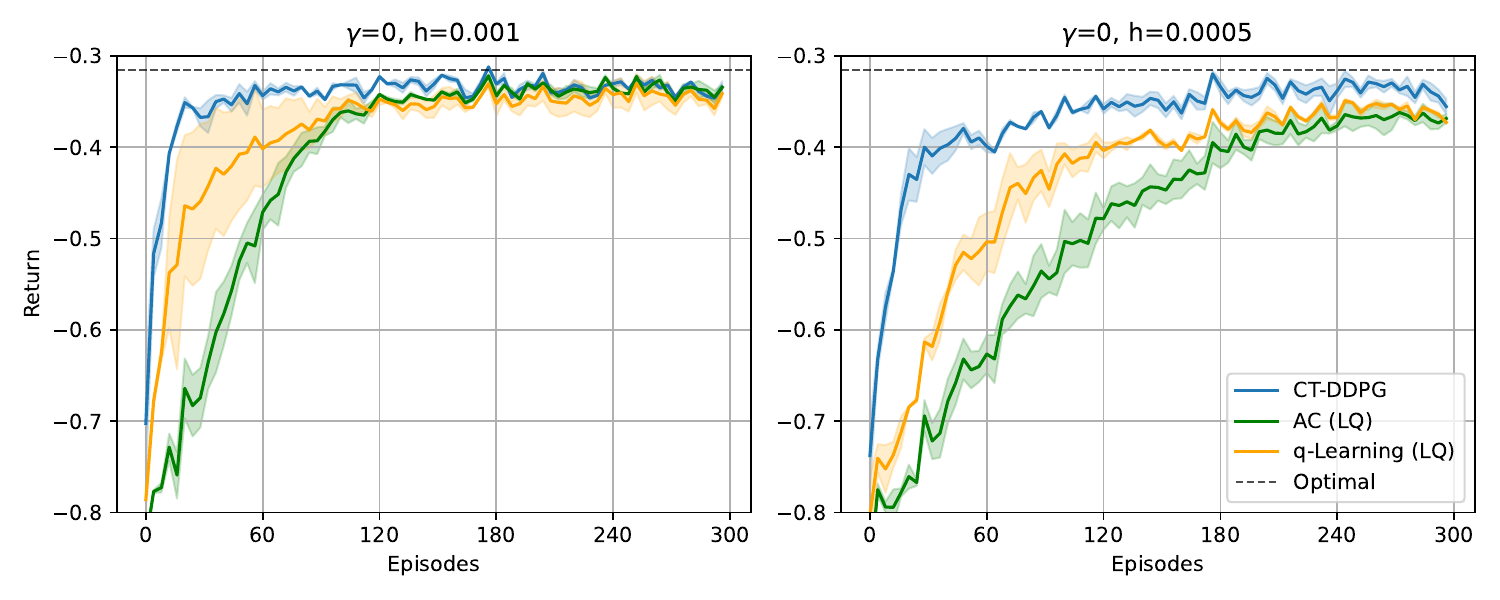}
    \caption{Results for C-S model with $\gamma=0$. AC and q-Learning exploit the LQ structure and corresponding parameterization, while CT-DDPG uses neural network parameterized policy.}
    \label{fig:cs_lq}
\end{figure}

\begin{figure}[!ht]
    \centering
    \includegraphics[width=0.9\linewidth]{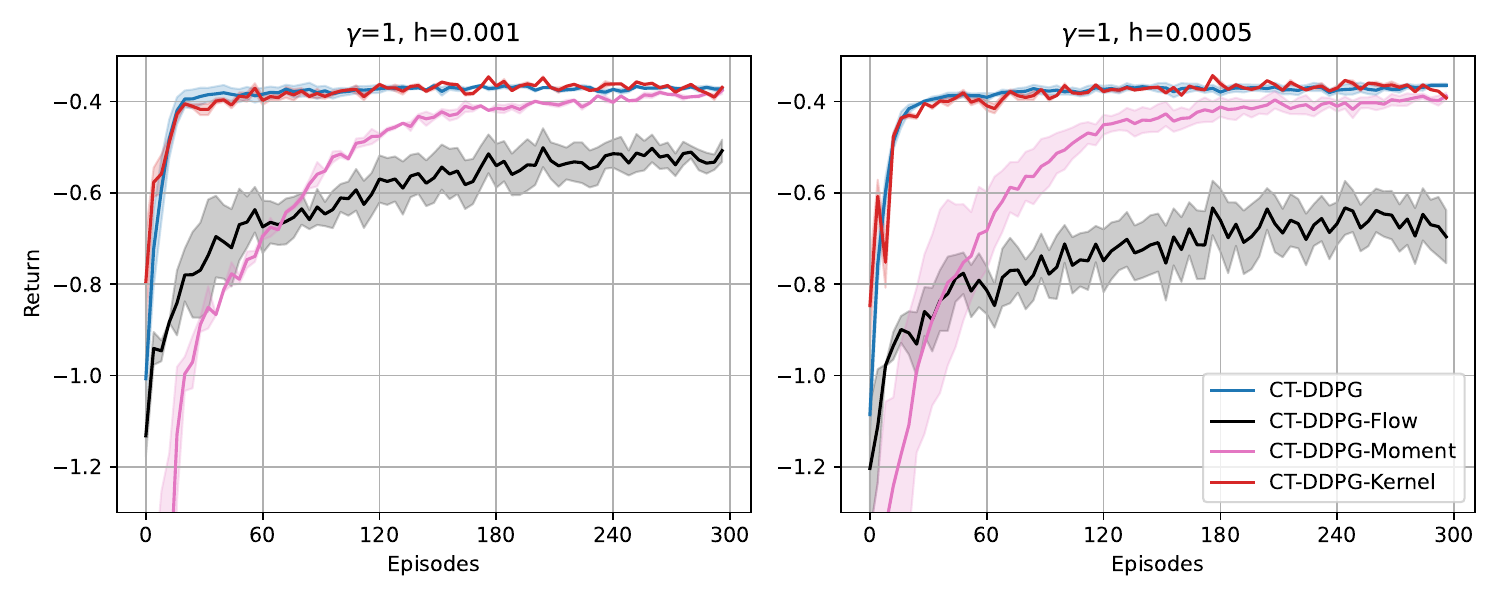}
    \caption{Results for C-S model with $\gamma=1$. Different designs of mean-field distribution embeddings in CT-DDPG lead to different convergence rates.}
    \label{fig:cs_non_lq}
\end{figure}

\begin{figure}[!ht]
    \centering
    \includegraphics[width=0.9\linewidth]{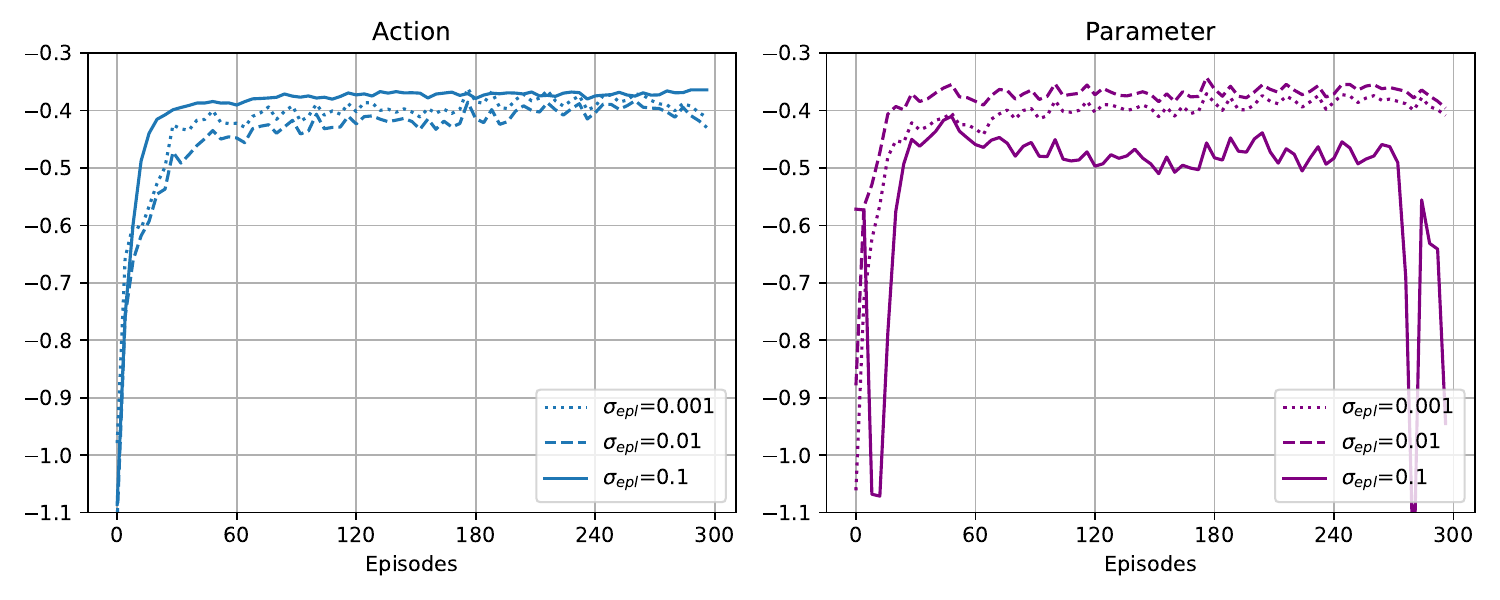}
    \caption{Results for C-S model with $\gamma=1, h=0.0005$, where CT-DDPG is implemented with different exploration strategies.}
    \label{fig:cs_non_lq_param}
\end{figure}

\paragraph{Results for LQ case.}

We first set $\gamma=0,C=1,c=0.1,\sigma=0.3,T=1$ in C-S dynamics and the initial state distribution is $\mu_0=\text{Unif}([0,1]^n)\times\text{Unif}([-1,1]^n)$.
The optimal policy is given explicitly in \eqref{eq:CS_LQ}. 
We shall examine the effect of different discretization step sizes; the corresponding results are reported in \Cref{fig:cs_lq}.

Observe that CT-DDPG achieves the fastest convergence to the optimal value, despite not explicitly exploiting the LQ structure. In contrast, AC and q-Learning adopt an LQ-based parameterization and nevertheless underperform CT-DDPG, further demonstrating the effectiveness of the proposed method. 


\paragraph{Results for non-LQ case.}

We further set $\gamma=1$, which imposes a non-trivial coupling between $z$ and $v$. The results of different distribution embedding methods in CT-DDPG are shown in \Cref{fig:cs_non_lq}.
We observe that both CT-DDPG and CT-DDPG-Kernel converge rapidly, indicating that a learnable neural network feature map can achieve performance comparable to that obtained when prior information is incorporated.
This demonstrates the efficiency and robustness of the proposed approach, as well as its compatibility with the deep RL framework. 
Although CT-DDPG-Moment uses only generic moment embeddings of the distribution and therefore converges more slowly, it still eventually attains the same optimal return as CT-DDPG. This further illustrates the effectiveness of deep-RL-based methods for MFC problems.
For CT-DDPG-Flow, it converges rather slowly, and the performance gap becomes more pronounced as $h$ decreases.
As discussed in \Cref{sec:alg}, although the value function can be directly characterized via Theorem \ref{thm:dpg_flow}, the martingale-based temporal difference loss induced by Theorem \ref{thm:martingale} provides a more informative signal for learning the decoupled value function, thereby substantially accelerating convergence.

To compare different exploration strategies, we evaluate CT-DDPG with both action-space and parameter-space exploration under varying exploration noise scales. As shown in \Cref{fig:cs_non_lq_param}, action-space exploration is more robust to the choice of noise scale, both in terms of convergence rate and final performance. In contrast, parameter-space exploration may suffer from severe instability when the perturbation scale is not properly tuned. Overall, with appropriate tuning, the two approaches achieve comparable performance in standard MFC problems.

\subsection{Optimal liquidation with  trade crowding}\label{subsec:liquidation}

In this section, we examine the performance of Algorithm \ref{alg:ddpg} in a portfolio liquidation problem with trading crowd, as studied in \cite{acciaio2019extended, reisinger2024fast, picarelli2025extended}. In this problem, a large number of market participants aim to liquidate their positions in the same asset by a fixed terminal time $T$, while accounting for the permanent price impact induced by their trading actions. The resulting cooperative equilibrium gives rise to an extended MFC problem for a representative agent.


Let $(\alpha_t)_{t\in [0,T]}$ denote the trading speed chosen by the representative agent. The state dynamics of the
extended MFC problem are given by: for all $t\in [0,T]$,  
\begin{equation}\label{eq:liquidation}
    \d Q_t = \alpha_t \d t,
    \quad
     \d S_t = \lambda \E[\alpha_t]\d t + \sigma\d W_t.
\end{equation}
Here 
$Q=(Q_t)_{t\in [0,T]}$ denotes the inventory process with a random initial state $Q_0$  representing the initial
inventories for all participants,
  $S=(S_t)_{t\in [0,T]}$
is the asset price process,   
the term $\lambda \sE[\alpha_t]$
with $\lambda \ge 0 $ 
captures the permanent market impact on the asset price induced by the aggregate trading of all participants,  and  $W$ is a one-dimensional Brownian motion   representing exogenous market noise.
The   objective of the agent is    to maximize the following reward functional 
over all adapted trading speeds: 
\begin{equation}
\label{eq:liquidation_reward}
    J (\alpha)= \E\left[\int_0^T -(Q_t^2+\alpha_tS_t+c \alpha_t^2)\d t + Q_T(S_T-CQ_T)\right],
\end{equation}
where the term 
$\alpha_t(S_t+c\alpha_t)$ represents the instantaneous liquidation cost with $c>0$ representing the linear temporary market impact, 
$Q^2_t$  penalizes inventory risk over time, and 
$Q_T(S_T-CQ_T)$  with $C>0$ is the liquidation value of the remaining inventory at terminal time.

Note that \eqref{eq:liquidation} and  \eqref{eq:liquidation_reward} constitute an LQ extended MFC  problem and the  optimal policy is given by (see e.g., \citep{yong2017linear}):
\begin{equation}
    \varphi^*(t,s,q,\mu)=-\frac{1}{c}\left(P_C(t)(q-\E_{q'\sim\mu}[q'])+P_{C-\frac{\lambda}{2}}(t)\E_{q'\sim\mu}[q']\right),
\end{equation}
where  for all $r\in\R$,
$(P_r(t))_{t\in [0,T]}$ satisfies 
$\frac{\d}{\d t}P_r(t)-\frac{1}{c}{P_r(t)}^2+1=0$ with $P_r(T)=r$.
The model architectures and training hyperparameters are the same as described in the previous section.
We set $\lambda=0.2, \sigma=0.3,c=0.1, C=1,T=1, h=0.0005$ and the initial mean-field distribution is $\mu_0=\text{Unif}([0.8,1.2])\times\text{Unif}([0.8,1.2])$.

The results of CT-DDPG with different exploration mechanisms are shown in \Cref{fig:liquidation}. 
Action-space exploration converges rapidly and robustly to the optimal value under various exploration noise scales, demonstrating its effectiveness for the extended MFC problems.
Parameter-space exploration is still    sensitive to noise scale, but it exhibits a faster convergence if the noise scale is properly tuned. Due to the specific structure of the liquidation problem, action space exploration does not change the expected control, and therefore provides limited exploration for the price process in \eqref{eq:liquidation}, leading to slightly slower convergence. This illustrates the potential advantage of parameter-space exploration in certain extended MFC problems, provided that the hyperparameters are chosen appropriately.

\begin{figure}[!ht]
    \centering
    \includegraphics[width=0.9\linewidth]{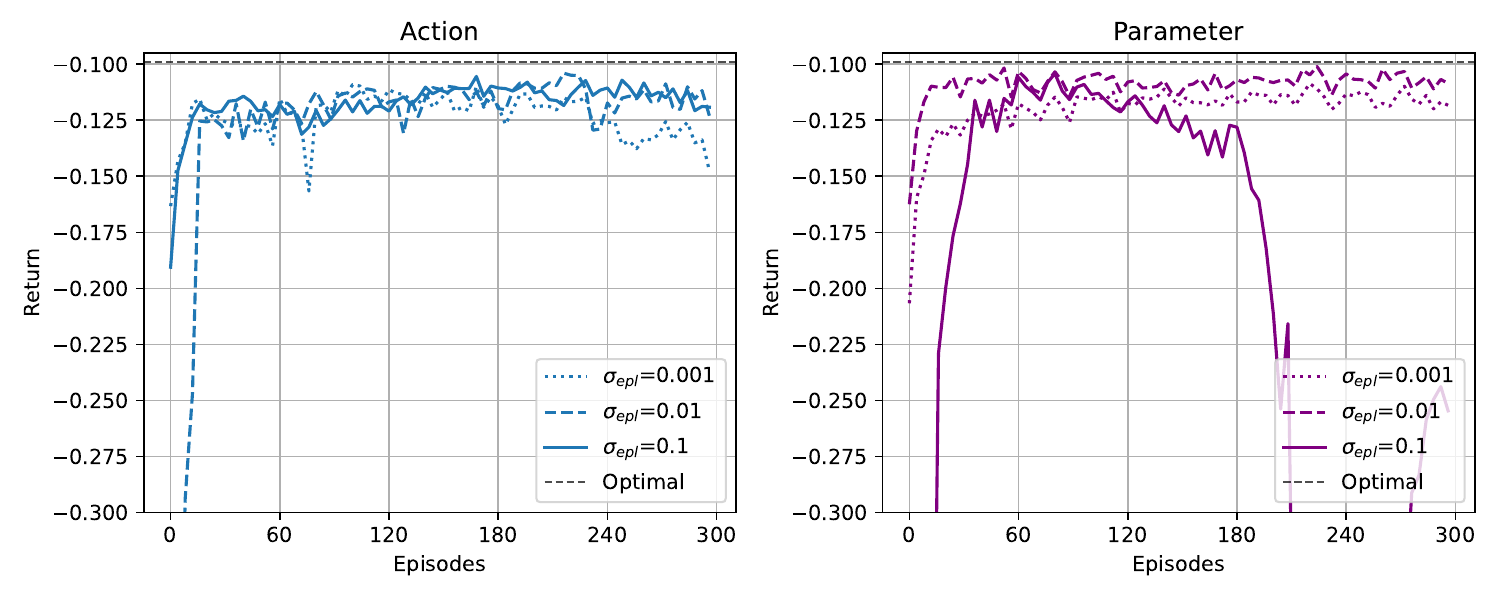}
    \caption{Results of CT-DDPG for liquidation problem with $h=0.0005$.}
    \label{fig:liquidation}
\end{figure}

In summary, CT-DDPG exhibits superior performance in terms of convergence speed and stability across all environment settings, verifying the efficiency and robustness of our method.

\section{Proofs}

\subsection{Proofs of Technical Results}

This section establishes several technical results used in the proofs of   Theorems \ref{thm:sensitivity} and \ref{thm:sensitivity_critic}.

We first recall that when  
  the function $V(\cdot, \cdot,\theta)$ 
  defined in \eqref{eq:cost_theta}
  is in $C^{1,2 }([0, T]   
 \times \mathcal{P}_2(\mathbb{R}^n))$,
   it is the unique solution to  an associated  linear  PDE.

\begin{lemma}
\label{lemma:pde}
    Suppose Assumption  \ref{assum:continuity} holds.
    Let 
     $\theta\in \sR^k$ 
   and assume  
     $
    V(\cdot, \cdot,\theta)   \in C^{1,2 }([0, T]  \times   \mathcal{P}_2(\mathbb{R}^n))$.
Then $V(\cdot,\cdot, \theta)$ is the unique classical solution to the following linear PDE:  
\begin{align} 
\label{eq:pde_v}
\begin{split} 
 & \cL^\theta[ w](t, \mu) +\bar r(t,  \mu,\theta) =0,
 \quad 
 \forall (t,  \mu)\in [0,T)  \times \cP_2(\sR^n),
\\
& w(T,  \mu) = \bar  g(  \mu),
 \quad \forall   \mu \in  \cP_2(\sR^n).
  \end{split}
\end{align}
 
\end{lemma}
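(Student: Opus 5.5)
The plan has two parts: first show that $V(\cdot,\cdot,\theta)$ solves \eqref{eq:pde_v}, then prove uniqueness among classical solutions. Both parts rest on two ingredients. The first is the flow (semigroup) property of McKean--Vlasov laws. The second is the chain rule (It\^o's formula) along the deterministic flow of measures for functions in $C^{1,2}$, as in \cite[Proposition 5.102]{carmona2018probabilistic}.

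\emph{Step 1 (flow property and dynamic programming).} By strong uniqueness of \eqref{eq:sde_mv_population} under Assumption \ref{assum:continuity}, for $t\le s\le T$ we have $X^{t,\xi,\theta}_u=X^{s,X^{t,\xi,\theta}_s,\theta}_u$ for $u\in[s,T]$. Weak uniqueness then gives $\sP^{t,\mu,\theta}_u=\sP^{s,\sP^{t,\mu,\theta}_s,\theta}_u$. Substituting this into the second expression in \eqref{eq:cost_theta} yields, for all $s\in[t,T]$,
\begin{equation*}
e^{-\beta t}V(t,\mu,\theta)=\int_t^s e^{-\beta u}\bar r(u,\sP^{t,\mu,\theta}_u,\theta)\,\d u+e^{-\beta s}V(s,\sP^{t,\mu,\theta}_s,\theta).
\end{equation*}
Taking $t=T$ gives the terminal condition $V(T,\mu,\theta)=\bar g(\mu)$.

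\emph{Step 2 (the PDE).} Since $V(\cdot,\cdot,\theta)\in C^{1,2}$, I apply It\^o's formula along the flow of measures to $u\mapsto e^{-\beta u}V(u,\sP^{t,\mu,\theta}_u,\theta)$:
\begin{equation*}
e^{-\beta s}V(s,\sP^{t,\mu,\theta}_s,\theta)-e^{-\beta t}V(t,\mu,\theta)=\int_t^s e^{-\beta u}\cL^\theta[V(\cdot,\cdot,\theta)](u,\sP^{t,\mu,\theta}_u)\,\d u.
\end{equation*}
The moment bounds needed to justify this formula come from Assumption \ref{assum:continuity}, together with the local boundedness built into the definition of $C^{1,2}$ applied on the compact set $\{\sP^{t,\mu,\theta}_u:u\in[t,T]\}$. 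Combining with Step 1, $\int_t^s e^{-\beta u}(\cL^\theta[V]+\bar r)(u,\sP^{t,\mu,\theta}_u)\,\d u=0$ for every $s\in[t,T]$. I then divide by $s-t$ and let $s\downarrow t$. This uses continuity of $u\mapsto(\cL^\theta[V]+\bar r)(u,\sP^{t,\mu,\theta}_u)$ at $u=t$, which in turn follows from $\cW_2$-continuity of $u\mapsto\sP^{t,\mu,\theta}_u$, continuity of $b,\sigma,r$ and of the derivatives of $V$ on supports, and uniform integrability from the growth bounds. The conclusion is $\cL^\theta[V](t,\mu)+\bar r(t,\mu,\theta)=0$ for $t<T$.

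\emph{Step 3 (uniqueness).} Let $w\in C^{1,2}$ be any classical solution of \eqref{eq:pde_v}. Applying the same chain rule to $e^{-\beta u}w(u,\sP^{t,\mu,\theta}_u)$ on $[t,T]$ and using the PDE gives
\begin{equation*}
e^{-\beta T}\bar g(\sP^{t,\mu,\theta}_T)-e^{-\beta t}w(t,\mu)=-\int_t^T e^{-\beta u}\bar r(u,\sP^{t,\mu,\theta}_u,\theta)\,\d u.
\end{equation*}
Comparing with \eqref{eq:cost_theta}, this says exactly $w(t,\mu)=V(t,\mu,\theta)$.

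The main obstacle is justifying the chain rule and the limit $s\downarrow t$ under the rather weak $C^{1,2}$ regularity used here. The derivatives are only assumed continuous at points $v\in\operatorname{supp}(\mu)$, and bounded only on compact sets of measures. I would handle this by noting that the flow $\{\sP^{t,\mu,\theta}_u\}$ is $\cW_2$-compact, so the integrability bounds in the definition of $C^{1,2}$ apply uniformly along it. When $\sigma$ is degenerate, I would additionally use a mollification argument on $\mu$ (convolving with a Gaussian so the support is the whole space) before passing to the limit.
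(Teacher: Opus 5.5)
Your argument is correct and is the standard one. The paper gives no proof of this lemma (it only ``recalls'' it), but your existence step (flow property, dynamic programming identity, It\^o's formula along $u\mapsto \sP^{t,\mu,\theta}_u$, then continuity of the integrand at $u=t$) and your verification-type uniqueness step are exactly the tools the paper uses in its proofs of Theorems~\ref{thm:sensitivity_critic_ncessary} and~\ref{thm:sensitivity_critic}. Your closing mollification remark is unnecessary: since $X^{t,\xi,\theta}_t=\xi\in\operatorname{supp}(\mu)$ a.s., continuity of the derivatives at such points plus the uniform integrability you describe already gives the limit $s\downarrow t$, whether or not $\sigma$ is degenerate, and the support issue in the chain rule itself is handled inside the cited It\^o formula.
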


Using Lemma  \ref{lemma:pde},
the following proposition 
characterizes   the performance difference between two 
 value functions $V(\cdot, \cdot,\theta)$ and $  V(\cdot,\cdot,{\theta'})$.
It extends the performance-difference lemmas in
\cite[Lemma 6.1]{cheng2025deterministic} and \cite[Lemma 3.2]{sethi2025entropy}
from classical control problems to the more general setting of McKean--Vlasov dynamics.

\begin{prop}
\label{prop:performance_difference}
 Suppose Assumption   \ref{assum:continuity} holds. 
  Let 
     $\theta\in \sR^k$ 
     and assume  
     $
    V^\theta\coloneqq V(\cdot,\cdot,\theta)   \in C^{1,2}([0, T]   
 \times \mathcal{P}_2(\mathbb{R}^n))$.
  For all $(t,  \mu)\in [0,T]\times   \cP_2(\sR^n)$
 and $\theta'\in \sR^k$,
 \begin{align}
 \begin{split}
 &  V^{\theta'}(t,  \mu )
  -V^{\theta }(t,  \mu )
  \\
  &\quad 
  =  
     \int_t^T
   e^{-\beta(s-t)} 
   \left((  \mathcal L^{  \theta'}[V^\theta] 
   -\mathcal L^{\theta }[V^\theta])
    (s,     \sP^{t,\mu,\theta'}_s ) 
    +\bar r\left(s, \sP^{t,\mu,\theta'}_s,\theta'\right) 
   - \bar r\left(s, \sP^{t,\mu,\theta'}_s,  \theta\right)
   \right)
   \d s. 
 \end{split}
 \end{align}

\end{prop}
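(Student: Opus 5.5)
The plan is to apply the chain rule along the flow of state laws generated by the \emph{perturbed} parameter $\theta'$ to the fixed function $V^\theta\in C^{1,2}$. Lemma~\ref{lemma:pde} then removes the $\theta$-generator term. Fix $(t,\mu)$, $\theta'$, and $\xi\in L^2(\cF_t;\sR^n)$ with law $\mu$. Let $X=X^{t,\xi,\theta'}$, with marginal flow $\sP^{t,\mu,\theta'}_s$. Define
\[
\Phi(s)\coloneqq e^{-\beta(s-t)}V^\theta(s,\sP^{t,\mu,\theta'}_s),\qquad s\in[t,T].
\]

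\textbf{Step 1 (chain rule).} I will use the deterministic chain rule for flows of marginal laws of It\^o processes (the expected version of the It\^o formula along a flow of measures, \cite[Proposition 5.102]{carmona2018probabilistic}). Applied to $w=V^\theta$ with the It\^o process $X$ (drift $b(s,X_s,\sP_{X_s},\theta')$, diffusion $\sigma(\cdots,\theta')$), it gives
\[
\Phi(T)-\Phi(t)=\int_t^T e^{-\beta(s-t)}\,\cL^{\theta'}[V^\theta](s,\sP^{t,\mu,\theta'}_s)\,\d s .
\]
The discount factor is absorbed because $\cL^{\theta'}$ contains $-\beta w$.

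\textbf{Step 2 (boundary values).} Since $V^\theta(T,\cdot)=\bar g$, we have $\Phi(T)=e^{-\beta(T-t)}\bar g(\sP^{t,\mu,\theta'}_T)$. Also $\Phi(t)=V^\theta(t,\mu)$. From the representation \eqref{eq:cost_theta} at $\theta'$,
\[
V^{\theta'}(t,\mu)=\int_t^T e^{-\beta(s-t)}\bar r(s,\sP^{t,\mu,\theta'}_s,\theta')\,\d s+\Phi(T).
\]
Combining this with Step 1 yields
\[
V^{\theta'}(t,\mu)-V^\theta(t,\mu)=\int_t^T e^{-\beta(s-t)}\big(\cL^{\theta'}[V^\theta]+\bar r(\cdot,\theta')\big)(s,\sP^{t,\mu,\theta'}_s)\,\d s .
\]

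\textbf{Step 3 (insert the PDE).} By Lemma~\ref{lemma:pde}, $\cL^\theta[V^\theta](s,\nu)+\bar r(s,\nu,\theta)=0$ for every $s\in[t,T)$ and $\nu\in\cP_2(\sR^n)$. In particular this holds at $\nu=\sP^{t,\mu,\theta'}_s$. Subtracting this zero from the integrand, and noting that $\{T\}$ is a null set, gives exactly the claimed identity.

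\textbf{Main obstacle.} The delicate point is justifying Step 1 under only the stated $C^{1,2}$ regularity. Our continuity and bounds hold only on compact subsets of $\cP_2$ and at points of the support, while the chain rule in the literature usually assumes global boundedness of $\partial_\mu w$ and $\partial_v\partial_\mu w$. I would handle this as follows.
\begin{itemize}
\item The curve $s\mapsto\sP^{t,\mu,\theta'}_s$ is $\cW_2$-continuous, so its image $\mathcal K$ is compact. The local bounds in the definition of $C^{1,2}$ then apply uniformly along the flow.
\item Assumption~\ref{assum:continuity} gives square-integrability of the coefficients, hence of $b\cdot\partial_\mu V^\theta$ and of $\sigma\sigma^\top\colon\partial_v\partial_\mu V^\theta$.
\item If needed, I would first prove the identity on $[t,T-\epsilon]$ and for localized or mollified versions, then pass to the limit by dominated convergence. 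Continuity of the integrand in $s$ follows from continuity of $\partial_t V^\theta$ and of the derivatives at support points.
\end{itemize}
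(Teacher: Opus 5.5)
Your proposal is correct and follows the paper's proof essentially step for step. You express $V^{\theta'}(t,\mu)$ through the terminal condition $V^\theta(T,\cdot)=\bar g$, apply the It\^o formula along the flow of measures \cite[Proposition 5.102]{carmona2018probabilistic} to $s\mapsto e^{-\beta(s-t)}V^\theta(s,\sP^{t,\mu,\theta'}_s)$, and then add and subtract $\cL^{\theta}[V^\theta]$ and use the PDE of Lemma~\ref{lemma:pde}. The paper handles your ``main obstacle'' simply by citing that It\^o formula, since the class $C^{1,2}([0,T]\times\cP_2(\sR^n))$ is defined with compact-set bounds and support-point continuity tailored to that result, so your localization/mollification step is not needed.
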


\begin{proof}
    Fix  $\theta,\theta'\in \mathbb {R}^k$, $(t, \mu) \in [0,T]\times   \cP_2(\sR^n)$, and 
     $\xi\in L^2(\cF_t, \sR^n)$ with  $ \xi \sim\mu$.  
    Let $  X^{t,\xi, \theta'} $
        be the  solution to \eqref{eq:sde_mv_population} with the parameter $\theta'$ and recall  
        $\sP^{t,\mu, \theta'}_s=\sP_{X^{t,\xi, \theta'}_s}$ for all $s\in [t,T]$. 
        By the definition \eqref{eq:cost_theta} of $V^{\theta'} (t,  \mu )$, 
    \begin{align}
    \label{eq:performance_difference1}
    \begin{split}
       & V^{\theta'} (t, \mu )-V^{\theta}(t, \mu) 
       \\
        & =
\int_t^T e^{-\beta(s-t)} \bar r\left(s, \sP^{t,\mu,\theta'}_s,\theta'\right) \d s + 
   e^{-\beta(T-t)}  \bar g ( \sP^{t,\mu,\theta'}_T  ) 
   -V^{\theta}(t, \mu) 
\\  
&= 
   e^{-\beta(T-t)}  V^{\theta}\left(T,  \sP^{t,\mu,\theta'}_T\right) 
-V^{\theta}(t,\sP^{t,\mu,\theta'}_t) 
 + \int_t^T e^{-\beta(s-t)} \bar r\left(s, \sP^{t,\mu,\theta'}_s,\theta'\right) \d s,
\end{split}
    \end{align}
  where    the last identity used the fact that $V^\theta(T, \mu)=\bar g( \mu) $ and 
  $\sP^{t,\mu,\theta'}_t =\mu$.
As    $V^\theta \in C^{1,2 }([0,T]\times   \cP_2(\sR^n))$,
  applying   It\^{o}'s formula
(e.g.~\cite[Proposition 5.102]{carmona2018probabilistic}) to 
$s\mapsto e^{-\beta (s-t)}  V^\theta(s, \sP^{t,\mu,\theta'}_s )$
yields 
\begin{align*}
   &  e^{-\beta(T-t)}  V^{\theta}\left(T,  \sP^{t,\mu,\theta'}_T\right) 
-V^{\theta}(t,\sP^{t,\mu,\theta'}_t)
   = 
     \int_t^T
   e^{-\beta(s-t)} \mathcal L^{  \theta'}[V^\theta] (s, \sP^{t,\mu,\theta'}_s)\d s  
 \\
 & \quad
 = 
  \int_t^T
   e^{-\beta(s-t)} 
   (
   \mathcal L^{  \theta'}[V^\theta] 
   -\mathcal L^{\theta }[V^\theta]
   +\mathcal L^{\theta }[V^\theta]
   )
   (s,     \sP^{t,\mu,\theta'}_s )\d s  
   \\
   &\quad 
    = 
      \int_t^T
   e^{-\beta(s-t)} 
   \left((  \mathcal L^{  \theta'}[V^\theta] 
   -\mathcal L^{\theta }[V^\theta])
    (s,     \sP^{t,\mu,\theta'}_s ) 
   - \bar r\left(s, \sP^{t,\mu,\theta'}_s,   \theta\right)
   \right)
   \d s, 
\end{align*}
where   $\mathcal L^{  \theta'} [V^\theta]  $ 
and   $\mathcal L^{\theta } [V^\theta]  $ are 
defined in   \eqref{eq:generator}, 
and the last identity used the PDE \eqref{eq:pde_v}. 
This along with 
\eqref{eq:performance_difference1}
proves the desired result. 
\end{proof}

We further prove the joint continuity of the state law with respect to time and the parameter.

\begin{lemma}
\label{lemma:flow_continuity}
 Suppose Assumption    \ref{assum:continuity}   
 holds.  
 For all $(t,  \mu)\in [0,T] \times \cP_2(\sR^n)$,
 the map 
 $[t,T]\times \sR^k\ni (s,\theta)\mapsto \sP^{t,\mu,\theta}_s\in \cP_2(\sR^n)$ is continuous.  
   
\end{lemma}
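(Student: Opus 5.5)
To prove Lemma \ref{lemma:flow_continuity}, I will fix $(t,\mu)$ and $\xi\in L^2(\cF_t;\sR^n)$ with law $\mu$, and work with the strong solutions $X^\theta\coloneqq X^{t,\xi,\theta}$, all driven by the same Brownian motion and the same initial condition. Since $\cW_2(\sP_{X^\theta_s},\sP_{X^{\theta'}_{s'}})^2\le \sE|X^\theta_s-X^{\theta'}_{s'}|^2$, the triangle inequality splits the claim into two estimates:
$$
\sE|X^{\theta}_s-X^{\theta}_{s'}|^2\to 0 \ \text{as } s'\to s, \text{ uniformly for } \theta \text{ near } \theta_0,
\qquad
\sup_{s\in[t,T]}\sE|X^{\theta}_s-X^{\theta_0}_s|^2\to 0 \ \text{as } \theta\to\theta_0 .
$$

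\emph{Time continuity.} Let $K$ be a compact neighbourhood of $\theta_0$ and set $L=\sup_{\theta\in K}\omega(|\theta|)<\infty$, which is finite because $\omega$ is locally bounded. Assumption \ref{assum:continuity} gives linear growth $|b|+|\sigma|\le L(1+|x|+M_2(\mu))$ for $\theta\in K$. The usual Gronwall argument, together with $M_2(\sP_{X_s})^2=\sE|X_s|^2$, then gives $\sup_{\theta\in K}\sup_s\sE|X^\theta_s|^2\le C(1+\sE|\xi|^2)$. Combining this with the It\^o isometry gives $\sE|X^\theta_s-X^\theta_{s'}|^2\le C|s-s'|$ uniformly on $K$.

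\emph{Parameter continuity.} Write $\Delta_s=X^\theta_s-X^{\theta_0}_s$ and split the drift difference as
$$
b(u,X^\theta_u,\sP_{X^\theta_u},\theta)-b(u,X^{\theta_0}_u,\sP_{X^{\theta_0}_u},\theta)
+\bigl[b(u,X^{\theta_0}_u,\sP_{X^{\theta_0}_u},\theta)-b(u,X^{\theta_0}_u,\sP_{X^{\theta_0}_u},\theta_0)\bigr],
$$
and do the same for $\sigma$. The first part is bounded by $L(|\Delta_u|+(\sE|\Delta_u|^2)^{1/2})$ using the Lipschitz property in Assumption \ref{assum:continuity}, which holds uniformly over $\theta\in K$. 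Applying the Burkholder--Davis--Gundy (or It\^o isometry) inequality and Gronwall's lemma yields
$$
\sup_s\sE|\Delta_s|^2\le C\,\sE\int_t^T |\delta b_u|^2+|\delta\sigma_u|^2\,\d u,
$$
where $\delta b_u,\delta\sigma_u$ denote the bracketed remainders evaluated along the fixed process $X^{\theta_0}$.

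\emph{Main obstacle.} The remaining step, and the one I expect to be hardest, is showing that this right-hand side tends to $0$. By assumption $b,\sigma$ are continuous, so the integrand converges to $0$ pointwise in $(u,\omega)$ as $\theta\to\theta_0$. For domination, linear growth gives the bound $4L^2(1+|X^{\theta_0}_u|+M_2(\sP_{X^{\theta_0}_u}))^2$, which is integrable on $[t,T]\times\Omega$ by the moment estimate above. Dominated convergence then closes the argument. The point is that only continuity in $\theta$ is available, with no Lipschitz bound, so the argument has to go through dominated convergence with uniform-in-$K$ growth constants rather than a rate. Joint continuity in $(s,\theta)$ follows by combining the two estimates.
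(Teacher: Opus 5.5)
Your proof is correct. The time-continuity half is exactly what the paper does: a uniform second-moment bound plus It\^o isometry gives $\cW_2(\sP^{t,\mu,\theta}_r,\sP^{t,\mu,\theta}_s)\le C_K|r-s|^{1/2}$, uniformly for $\theta$ in bounded sets.

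You differ in how you get continuity in $\theta$. The paper does not prove it directly. It invokes \cite[Theorem 3.1]{ma2025continuous}, after checking that the coefficients satisfy that reference's hypotheses (H1)--(H4) with Lyapunov function $1+|x|^2+M_2(\mu)^2$. This gives only pointwise-in-$s$ continuity of $\theta\mapsto\sP^{t,\mu,\theta}_s$, which the uniform H\"older estimate then upgrades to joint continuity.

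You instead argue directly: a synchronous coupling (same $W$, same $\xi$), the Lipschitz bound from Assumption \ref{assum:continuity} uniform over $\theta\in K$, Gronwall, and dominated convergence on the residual terms $\delta b,\delta\sigma$. That last step needs only continuity of $b,\sigma$ in $\theta$ and the linear-growth domination.

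What each route buys:
\begin{itemize}
\item Your argument is self-contained and proves something stronger: $\sup_{s\in[t,T]}\sE|X^{t,\xi,\theta}_s-X^{t,\xi,\theta_0}_s|^2\to 0$, i.e.\ strong $L^2$ stability of the coupled processes, uniform in time. Joint continuity then follows from a one-line triangle inequality.
\item The paper's citation is shorter and relies on a result built for more general, Lyapunov-type coefficients that need not be globally Lipschitz. Under the global Lipschitz structure of Assumption \ref{assum:continuity}, that generality is not needed, and your argument avoids verifying an external set of hypotheses.
\end{itemize}
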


\begin{proof}
Fix $(t,  \mu)\in [0,T] \times \cP_2(\sR^n)$. 
By Assumption    \ref{assum:continuity},
for all $\theta$ in a  compact set, 
the coefficients $b$ and $\sigma$ are Lipschitz continuous 
and of linear growth, 
and hence Assumptions (H1)-(H4) in \cite{ma2025continuous} hold (the Lyapunov condition (H2) holds with the function $V(x,\mu)=1+|x|^2+M_2(\mu)^2)$.
Thus by \cite[Theorem 3.1]{ma2025continuous},
$\theta\mapsto \sP^{t,\mu,\theta}_s$ is continuous for all $s\in [t,T]$.

We now claim that 
 for any bounded set  $ K\subset \sR^k$, 
there exists a constant $C_K\ge 0$ such that for all $s,r\in [t,T]$ and $\theta\in K$, 
\begin{equation}
\label{eq:holder_continuous_t}
W_2( \sP^{t,\mu,\theta}_r, \sP^{t,\mu,\theta}_s)\le C_K|r-s|^{1/2}.
\end{equation} 
This along with the continuity of $\theta\mapsto \sP^{t,\mu,\theta}_s$  implies the joint continuity of 
$(\theta,s)\mapsto \sP^{t,\mu,\theta}_s$.  

To show \eqref{eq:holder_continuous_t}, assume without loss of generality that $t\le s<r \le T$,
  let $ \xi\in  L^2(\mathcal{F}_t; \sR^n)$ with $\mu=\sP_{\xi}$,
and let $C\ge 0$ be a generic constant independent of $s,r$ and $\theta$.
 Then by \eqref{eq:sde_mv_population},
\begin{align*}
&W_2^2( \sP^{t,\mu,\theta}_r, \sP^{t,\mu,\theta}_s)
\le \sE[|X^{t,\xi, \theta}_r-X^{t,\xi, \theta}_s|^2]
\\
&
=  \sE 
\left[\left| \int_s^r b(u,X^{t, \xi, \theta}_u, \sP_{X^{t, \xi,\theta}_u}, \theta)\d u+\int_s^r  \sigma(u,X^{t, \xi,\theta}_u, \sP_{X^{t, \xi, \theta}_u}, \theta)\d W_u\right|^2\right] 
\\
&
\le
C\left( \sE 
\left[  \int_s^r |b(u,X^{t, \xi, \theta}_u, \sP_{X^{t, \xi,\theta}_u}, \theta)|^2\d u \right]+
\sE 
\left[  \int_s^r  |\sigma(u,X^{t, \xi,\theta}_u, \sP_{X^{t, \xi, \theta}_u}, \theta)|^2\d u  \right]
\right)
\\
&\le 
C(r-s)\left(   1 +\sup_{u\in [s,r]}\sE[|X^{t, \xi, \theta}_u|^2] 
\right)\le C(r-s),
\end{align*}
where we have used the linear growth of $b$ and $\sigma$ in   \Cref{assum:continuity} and 
the fact that $\sE[|X^{t, \xi, \theta}_s|^2]\le C$, uniformly with respect to $s\in [t,T]$ and $\theta \in K$. This proves the  desired estimate  \eqref{eq:holder_continuous_t}.
\end{proof}

\subsection{Proofs of 
Theorems 
\ref{thm:sensitivity}, 
\ref{thm:sensitivity_critic_ncessary}, \ref{thm:sensitivity_critic}
and \ref{thm:martingale}}
 
We now prove Theorem \ref{thm:sensitivity}
based on Proposition \ref{prop:performance_difference} and Lemma \ref{lemma:flow_continuity}.

\begin{proof}[Proof of Theorem \ref{thm:sensitivity}]
To simplify the notation, we write 
     $
    V^\theta\coloneqq V(\cdot,\cdot,\theta) $ for all  
     $\theta\in \sR^k$. 
Since $\partial_\theta V^{\theta }(t,  \mu ) = (\partial_{\theta_1}V^{\theta }(t,  \mu ), \ldots, \partial_{\theta_k}V^{\theta }(t,  \mu ))^\top$, it suffices to prove that for any fixed   $\theta,\theta'\in \mathbb {R}^k$ and  $(t, \mu) \in [0,T]\times   \cP_2(\sR^n)$, 
\begin{align}
 \begin{split}
&  \frac{\d }{\d \epsilon }V^{\theta+\epsilon \theta'}(t, \mu )
  \bigg\vert_{\epsilon=0}
   =  
     \int_t^T
   e^{-\beta(s-t)} 
   \partial_\theta 
   A [V^\theta] 
    (s, \sP^{t,\mu,\theta}_s ,\theta) \, 
   \d s\cdot \theta'.
 \end{split}
 \end{align}

To this end,
let  $\xi\in L^2(\cF_t, \sR^n)$,
    and  for all $\epsilon \in [-1,1]$,
    let $  (X^{t,\xi, \theta+\epsilon \theta'}_s)_{s\in [t,T]} $
         be the solution to \eqref{eq:sde_mv_population} with the parameter $\theta+\epsilon \theta'$,
         and 
let $\sP^{t,\mu,\theta+\epsilon \theta'}_s = \sP_{X^{t,\xi, \theta+\epsilon \theta'}_s}$.  
For all $\epsilon\in[-1,1]$,
by Proposition \ref{prop:performance_difference}
and the fundamental theorem of calculus, 
   \begin{align}
   \label{eq:derivative_proof1}
 \begin{split}
& \frac{1}{ \epsilon} \left(V^{\theta+\epsilon \theta'}(t,  \mu )
  -V^{\theta }(t,  \mu )\right) 
  \\
  &=
    \frac{1}{ \epsilon}  \int_t^T
   e^{-\beta(s-t)} 
   \left(    
   A[V^\theta] (s, \sP^{t,\mu,\theta+\epsilon \theta'}_s, \theta+\epsilon \theta')
   -
   A[V^\theta] (s, \sP^{t,\mu,\theta+\epsilon \theta'}_s, \theta)
   \right)
   \d s
  \\
& =  
     \int_t^T
   e^{-\beta(s-t)} 
   \left(  
   \int_0^1  
   \partial_\theta 
   A [V^\theta] 
    (s, \sP^{t,\mu,\theta+\epsilon \theta'}_s ,\theta+u \epsilon\theta')   \d u  
   \right)
   \d s  \cdot  \theta'.
 \end{split}
 \end{align}
Observe that for all $s\in [t,T]$
and $u\in [0,1]$,
by Lemma \ref{lemma:flow_continuity},
$\lim_{\epsilon\to 0}\sP^{t,\mu,\theta+\epsilon \theta'}_{s}=\sP^{t,\mu,\theta}_{s}$ for all $s\in [0,T]$,
and hence by Assumption 
\ref{assum:smoothness},
$$
 \lim_{\epsilon\to 0}\partial_\theta 
   A [V^\theta] 
    (s, \sP^{t,\mu,\theta+\epsilon \theta'}_s ,\theta+u \epsilon\theta') 
    =\partial_\theta 
   A [V^\theta] 
    (s, \sP^{t,\mu,\theta}_s ,\theta ). 
$$
Moreover, 
  the continuity of $(s,\theta)\mapsto \sP^{t,\mu,\theta}_{s}$ and the compactness of $[t,T]\times [-1,1]$ imply that 
$\{\sP^{t,\mu,\theta+\epsilon \theta'}_s\mid (s,\epsilon)\in [t,T]\times [-1,1]\}$ is compact in $\cP_2(\sR^n)$.  
Hence using the dominated convergence theorem and passing  $\epsilon \to 0$ 
in \eqref{eq:derivative_proof1} yields the desired identity. 
\end{proof}

We further prove Theorems  \ref{thm:sensitivity_critic_ncessary} and  \ref{thm:sensitivity_critic}.

\begin{proof}[Proof of Theorem \ref{thm:sensitivity_critic_ncessary}]
$V^\theta$ and $q^\theta$ satisfy \eqref{eq:bellman_param_necessary}  
due to Lemma \ref{lemma:pde} and the definition of $q^\theta= A[V^\theta]$  in \eqref{eq:advantage_rate}.
The functions 
$V^\theta$ and $q^\theta$  satisfy 
\eqref{eq:constant_flow_necessary} 
due to 
It\^o's formula (e.g.~\cite[Proposition 5.102]{carmona2018probabilistic}).
\end{proof}

\begin{proof}[Proof of Theorem \ref{thm:sensitivity_critic}]
    
    For all 
    $(t,\mu,\theta')\in [0,T]\times \cP_2(\sR^n)\times \cO_{t,\mu}(\theta)$
    and 
    $s\in [t,T]$,
    by It\^o's formula (e.g.~\cite[Proposition 5.102]{carmona2018probabilistic}),
    \begin{equation*}
        \begin{aligned}
            &e^{-\beta s}\hat{V}(s,\P^{t,\mu,\theta'}_s) - e^{-\beta t}\hat{V}(t,\P^{t,\mu,\theta'}_t)+\int_t^se^{-\beta u}\hat{r}(u,\P^{t,\mu,\theta'}_u,\theta')\,\d u \\
            &=\int_t^se^{-\beta u}\bigg\{\partial_t\hat{V}(u,\P^{t,\mu,\theta'}_u)-\beta\hat{V}(u,\P^{t,\mu,\theta'}_u)  +\sE_{\xi\sim \P^{t,\mu,\theta'}_u }\bigg[b(u,\xi,\P^{t,\mu,\theta'}_u,\theta')\cdot \partial_\mu \hat{V}(u,\P^{t,\mu,\theta'}_u)(\xi)\\
            &\qquad\qquad   +\frac{1}{2}(\sigma\sigma^\top) (u,\xi,\P^{t,\mu,\theta'}_u,\theta') :
            \partial_v\partial_\mu \hat{V}(u,\P^{t,\mu,\theta'}_u) (\xi)\bigg]+\bar{r}(u,\P^{t,\mu,\theta'}_u,\theta')\bigg\}\,\d u \\
            &= \int_t^se^{-\beta u}
             A [\hat{V}](u,\P^{t,\mu,\theta'}_u,\theta')
             \,\d u.
        \end{aligned}
    \end{equation*}
This along with \eqref{eq:constant_flow} implies 
    \begin{equation}\label{eq:q_condition}
        \int_t^s
        e^{-\beta u}(A [\hat{V}](u,\P^{t,\mu,\theta'}_u,\theta')-\hat{q} (u,\P^{t,\mu,\theta'}_u,\theta'))\,\d u=0,\quad  \forall s\in [t,T].
    \end{equation}
By Assumption 
 \ref{assum:smoothness}
 and Lemma   \ref{lemma:flow_continuity},
    $u\mapsto e^{-\beta u}(A [\hat{V}](u,\P^{t,\mu,\theta'}_u,\theta')-\hat{q} (u,\P^{t,\mu,\theta'}_u,\theta'))$
is continuous, 
and hence by \eqref{eq:q_condition},
$A [\hat{V}](u,\P^{t,\mu,\theta'}_u,\theta')=\hat{q} (u,\P^{t,\mu,\theta'}_u,\theta')$
for all $u\in [t,T]$.
Setting $u=t$ implies 
$$
A [\hat{V}](t,\mu,\theta')=\hat{q} (t,\mu,\theta'),
\quad \forall  (t,\mu,\theta')\in [0,T]\times \cP_2(\sR^n)\times  \cO_{t,\mu}(\theta).
$$
Taking $\theta'=\theta$ and using  \eqref{eq:bellman_param}  shows that $\hat V$ is a classical solution to  
the following linear PDE: 
for all $(t,\mu)\in [0,T)\times \cP_2(\sR^n)$,
$
A [\hat{V}](t,\mu,\theta)=
\mathcal L^\theta [\hat{V}](t,\mu)
+\bar r(t,\mu,\theta)=0,
$
and $\hat V(T,\mu)=\bar g(\mu)$.
By Lemma \ref{lemma:pde},
$\hat V(t,\mu)=V(t,\mu, \theta)$
for all $(t,\mu)\in [0,T]\times \cP_2(\sR^n)$. 
This further implies that 
$\hat q(t,\mu,\theta')=A[V(\cdot,\cdot,\theta)](t,\mu,\theta')$ for all $\theta'\in \cO_{t,\mu}(\theta)$.
\end{proof}

\begin{proof}[Proof of Theorem \ref{thm:martingale}]
 It suffices to prove that \eqref{eq:martingale} implies that $\hat V$ given in  \eqref{eq:decompose_V} 
 satisfies \eqref{eq:constant_flow_policy} in 
 Theorem \ref{thm:dpg_flow}.
 To see it, 
 by \eqref{eq:martingale},
  \begin{align*}
    \begin{split}
     &e^{-\beta t}\hat{V}_D(t,X_t^{t,\xi,\theta'},\sP_t^{t,\mu,\theta'}) =\sE[e^{-\beta s}\hat{V}_D(s,X_s^{t,\xi,\theta'},\sP_s^{t,\mu,\theta'})\mid \cF_t]
        \\
        &\quad +\sE\left[\int_t^se^{-\beta u}(r-\hat{q}_D)(u,X_u^{t,\xi,\theta'},\varphi_{\theta'}(u,X_u^{t,\xi,\theta'},\sP_u^{t,\mu,\theta'}),
        (\id, \varphi_{\theta'} (u,\cdot, \sP_u^{t,\mu,\theta'}))_\sharp \sP_u^{t,\mu,\theta'})\d u\,\bigg\vert \, \cF_t\right], 
    \end{split}
\end{align*}  
from which by taking expectations on both sides  yields
 \begin{align*}
    \begin{split}
     &e^{-\beta t}\hat{V}(t,\mu) =\sE[e^{-\beta s}\hat{V}_D(s,X_s^{t,\xi,\theta'},\sP_s^{t,\mu,\theta'})]
        \\
        &\quad +\sE\left[\int_t^se^{-\beta u}(r-\hat{q}_D)(u,X_u^{t,\xi,\theta'},\varphi_{\theta'}(u,X_u^{t,\xi,\theta'},\sP_u^{t,\mu,\theta'}),
        (\id, \varphi_{\theta'} (u,\cdot, \sP_u^{t,\mu,\theta'}))_\sharp \sP_u^{t,\mu,\theta'})\d u \right].
    \end{split}
\end{align*}  
This along with  Fubini's theorem, $\sP_{X_s^{t,\xi,\theta'}}=\sP_s^{t,\mu,\theta'}$ 
 and the definitions of $\hat V$, $\hat q$ and $r^\varphi$ yields 
 \eqref{eq:constant_flow_policy}. 
 The desired conclusion then follows from 
  Theorem \ref{thm:dpg_flow} and 
the chain rule \eqref{eq:chain_rule_q}. 
   \end{proof}

\noindent \textbf{Acknowledgements and Disclosure of Funding}  

\vspace{1mm}

\noindent Huy\^en  Pham is supported by the Soci\'et\'e G\'en\'erale Chair ``Risques Financiers", FiME (Laboratory of Finance and Energy Markets), and the EDF–CACIB Chair ``Finance and Sustainable Development''. 
Yufei Zhang’s research is supported by CNRS-Imperial “Abraham de Moivre” International Research Lab in Mathematics.

\bibliographystyle{abbrv}
\bibliography{ref}

\end{document}

%% file: math_commands.tex
\usepackage{amsmath,amsfonts,bm}

\def\eqref#1{equation~\ref{#1}}

\def\1{\bm{1}}

\DeclareMathAlphabet{\mathsfit}{\encodingdefault}{\sfdefault}{m}{sl}
\SetMathAlphabet{\mathsfit}{bold}{\encodingdefault}{\sfdefault}{bx}{n}

\def\sF{{\mathbb{F}}}

\def\sP{{\mathbb{P}}}

\def\sR{{\mathbb{R}}}

\newcommand{\E}{\mathbb{E}}

\newcommand{\R}{\mathbb{R}}

\newcommand{\Var}{\mathrm{Var}}



%% file: def.tex
\newtheorem{thm}{Theorem}[section]

\newtheorem{lemma}[thm]{Lemma}

\newtheorem{prop}[thm]{Proposition}
\theoremstyle{definition}

\crefname{equation}{eq.}{eqs.}
\Crefname{equation}{Eq.}{Eqs.}
\crefname{Assumption}{asp.}{asps.}
\Crefname{Assumption}{Asp.}{Asps.}
\crefname{thm}{thm.}{thms.}
\Crefname{thm}{Theorem}{Thms.}
\crefname{prop}{prop.}{props.}
\Crefname{prop}{Prop.}{Props.}
\crefname{definition}{def.}{defs.}
\Crefname{definition}{Def.}{Defs.}
\crefname{algorithm}{alg.}{algs.}
\Crefname{algorithm}{Alg.}{Algs.}
\crefname{figure}{fig.}{figs.}
\Crefname{figure}{Fig.}{Figs.}
\crefname{section}{sec.}{secs.}
\Crefname{section}{Sec.}{Secs.}